\numberwithin{equation}{section}
\journal{}
\begin{document}

\newtheorem{definition}{Definition}
\newtheorem{lemma}{Lemma}
\newtheorem{remark}{Remark}
\newtheorem{theorem}{Theorem}
\newtheorem{proposition}{Proposition}
\newtheorem{assumption}{Assumption}
\newtheorem{example*}{Example}
\newtheorem{corollary}{Corollary}
\def\e{\varepsilon}
\def\Rn{\mathbb{R}^{n}}
\def\Rm{\mathbb{R}^{m}}
\def\E{\mathbb{E}}
\def\hte{\bar\theta}
\def\cC{{\mathcal C}}
\newcommand{\Cp}{\mathbb C}
\newcommand{\R}{\mathbb R}
\renewcommand{\L}{\mathcal L}
\newcommand{\supp}{\mathrm{supp}\,}

\numberwithin{equation}{section}

\begin{frontmatter}

\title{{\bf The stochastic fractional nonlinear Schr\"odinger equations in $H^{\alpha}$ and structure-preserving algorithm
}}
\author{\normalsize{\bf Ao Zhang$^{a}\footnote{ aozhang1993@csu.edu.cn}$
Yanjie Zhang$^{b}\footnote{zhangyj2022@zzu.edu.cn}$
Pengde Wang$^{c}\footnote{pengde\_wang@yeah.net}$
Xiao Wang$^{d}\footnote{xwang@vip.henu.edu.cn}$
Jinqiao Duan$^{e}\footnote{duan@gbu.edu.cn}$} \\[10pt]
\footnotesize{${}^a$ School of Mathematics and Statistics and HNP-LAMA,  Central South University,  Changsha 410083, China} \\[5pt]
\footnotesize{${}^b$ School of Mathematics and Statistics, Zhengzhou University, Zhengzhou 450052,  China}  \\[5pt]
\footnotesize{${}^c$ College of Mathematics and Information Science, Henan University of Economics and Law, Zhengzhou 450046,  China} \\[5pt]
\footnotesize{${}^d$ School of Mathematics and Statistics, Henan University, Kaifeng 475001, China}\\[5pt]
\footnotesize{${}^e$  Department of Mathematics and Department of Physics, Great Bay University, Dongguan, Guangdong 523000, China}
}

\begin{abstract}
In this paper, we first investigate the global existence of a solution for the stochastic fractional nonlinear Schr\"odinger equation with radially
symmetric initial data in a suitable energy space $H^{\alpha}$. We then show that the stochastic fractional nonlinear Schr\"odinger equation in the Stratonovich sense forms an infinite-dimensional stochastic Hamiltonian system, with its phase flow preserving symplecticity. Finally, we develop a stochastic midpoint scheme for the stochastic fractional nonlinear Schr\"odinger equation from the perspective of symplectic geometry. It is proved that the stochastic midpoint scheme satisfies the corresponding symplectic law in the discrete sense. A numerical example is conducted to validate the efficiency of the theory.
\end{abstract}

\begin{keyword}
 Global well-posedness,  Hamiltonian systems, stochastic midpoint scheme, stochastic multi-symplectic structure.

\end{keyword}

\end{frontmatter}


\section{Introduction.}

The fractional nonlinear Schr\"odinger equation, as an extension of the standard Schr\"odinger equation, arises in diverse fields like nonlinear optics \cite{Longhi15}, quantum physics \cite{BM23}, and water propagation \cite{Pusateri14}. Inspired by the Feynman path approach to quantum mechanics, Laskin \cite{Laskin02} utilized the path integral over L\'evy-like quantum mechanical paths to derive a fractional Schr\"odinger equation. Kirkpatrick \cite{Kirkpatrick02} examined a broad range of discrete nonlinear Schr\"odinger equations on the lattice $h\mathbb{Z}$ with mesh size $h>0$. They demonstrated that the resulting dynamics converge to a nonlinear fractional Schr\"odinger equation as $h\rightarrow 0$. Ionescu and Pusateri \cite{IP14} demonstrated the global existence of small, smooth, and localized solutions for a specific fractional semilinear cubic nonlinear Schr\"odinger equation in one dimension. Shang and Zhang \cite{SZ} investigated the existence and multiplicity of solutions for the critical fractional Schr\"odinger equation. They demonstrated the presence of a nonnegative ground state solution and explored the relationship between the number of solutions and the set's topology. Choi and Aceves \cite{CA23} proved that the solutions to the discrete nonlinear Schr\"odinger equation with non-local algebraically decaying coupling converged strongly in $L^2(\mathbb{R}^n)$ to those of the continuum fractional nonlinear Schr\"odinger equation. Frank and his colleagues \cite{LS15} established general uniqueness findings for radial solutions of linear and nonlinear equations involving the fractional Laplacian $(-\Delta)^{\alpha}$ with $\alpha \in(0,1)$ for any space dimensions $n\geq 1$. Wang and Huang \cite{Huang15} presented an energy-conserving difference scheme for the nonlinear fractional Schr\"odinger equations and provided a thorough analysis of the conservation property. Duo and Zhang \cite{DZ16} proposed three mass-conservative Fourier spectral methods for solving the fractional nonlinear Schr\"odinger equation.

In certain situations, it is necessary to consider randomness. Understanding the impact of noise on wave propagation is a major challenge, as it can significantly alter the qualitative behavior and lead to new properties. Stochastic nonlinear Schr\"odinger equations are employed as nonlocal models for wave propagation in various physical applications. For the stochastic Schr\"odinger equation driven by Gaussian noise, Bouard and Debussche \cite{Debussche99, Debussche05} examined a conservative stochastic nonlinear Schr\"odinger equation and the impact of multiplicative Gaussian noises, demonstrating the global presence and uniqueness of solutions. Herr et al. \cite{rockner19} examined the scattering behavior of global solutions to stochastic nonlinear Schr\"odinger equations with linear multiplicative noise. Barue et al. \cite{Zhang17} demonstrated that the explosion could be effectively prevented over the entire time interval $[0, \infty)$. Deng et al. \cite{Deng22} researched the spread of randomness in nonlinear dispersive equations using the theory of random tensors. Bouard and Debusschev \cite{Deb02, Deb04} developed a semi-discrete method for numerically approximating the stochastic nonlinear Schr\"odinger equation in the Stratonovich sense. Cui and his colleagues \cite {H19} gave the strong convergence rate of the splitting scheme for stochastic Schr\"odinger equations. Liu \cite{JL130, JL13} examined the error of a semi-discrete method and demonstrated that the numerical scheme possessed a strong order of $\frac{1}{2}$. Chen and Hong \cite{CH16} introduced a broad category of stochastic symplectic Runge-Kutta methods for the temporal direction of the stochastic Schr\"odinger equation in the Stratonovich sense. Anton and Cohen \cite{AC18} demonstrated the robust convergence of an exponential integrator for the time discretization of stochastic Schr\"odinger equations influenced by additive or multiplicative It\^o noise. Cui et al.\cite{CW17, CH18, CHL17} introduced the stochastic symplectic and multi-symplectic methods and demonstrated their convergence with a temporal order of one in probability. They also proved a strong rate $1/2$ as well as a weak rate $1$ of a splitting scheme for a damped stochastic cubic Schr\"odinger equation with linear multiplicative trace-class noise and large enough damping term. Br\'ehier and Cohen \cite{CDC} investigated the qualitative characteristics and convergence order of a splitting scheme for the stochastic Schr\"odinger equations with additive noise. They showed the stochastic symplecticity of the numerical solution and consistently preserved the expected mass. Yang and Chen \cite{Chen17} demonstrated the presence of martingale solutions for the stochastic fractional nonlinear Schr\"odinger equation on a limited interval. Zhang et al. \cite{ZZD} proved the existence and uniqueness of a global solution to the stochastic fractional nonlinear Schr\"odinger equation in $L^{2}(\mathbb{R}^n)$. For the stochastic Schr\"odinger equation driven by jump noise, Liu and his collaborators \cite{Liu21} established a new version of the stochastic Strichartz estimate for the stochastic convolution driven by jump noise. Few studies involve the well-posedness of the solution in the energy space $H^{\alpha}$, its stochastic symplectic and convergence theorem, and symplectic semi-discretization for stochastic fractional nonlinear Schr\"odinger equation with multiplicative noise.

This paper focuses on the examination of the stochastic fractional nonlinear Schr\"odinger equation (SFNSE) with multiplicative noise in $H^\alpha(\mathbb{R}^n)$, i.e.,
\begin{equation}
\label{orie07}
\left\{
\begin{aligned}
&i du-\left[(-\Delta)^{\alpha} u+\lambda |u|^{2\sigma}u\right] dt=u \circ dW(t), \quad x \in \mathbb{R}^n, \quad t \geq 0, \\
&u(0)=u_0,
\end{aligned}
\right.
\end{equation}
where $u$ is a complex-valued process defined on $\mathbb{R}^+ \times \mathbb{R}^{n}$, $0<\sigma <\infty$, and $W(t)$ is a Wiener process. The parameter $\lambda = 1$ (resp. $\lambda=-1$) corresponds to the defocusing (resp. focusing) case. The fractional Laplacian operator $(-\Delta)^{\alpha}$ with an admissible exponent $\alpha \in (\frac{1}{2},1)$ is involved. The notation $\circ$ stands for the Stratonovitch integral.

Consider a probability space $(\Omega, \mathcal{F}, \mathbb{P})$, a filtration $\left(\mathcal{F}_{t}\right)_{t \geq 0}$, and a sequence of independent Brownian motions $\left(\beta_{k}\right)_{k \in \mathbb{N}^+}$ associated with this filtration. Given $\left(e_{k}\right)_{k \in \mathbb{N}^+}$ an orthonormal basis of $L^{2}\left(\mathbb{R}^{n}, \mathbb{R}\right)$, and a linear operator $\Phi$ on $L^{2}\left(\mathbb{R}^{n}, \mathbb{R}\right)$ with a real-valued kernel $k$:
\begin{equation}
\Phi h(x)=\int_{\mathbb{R}^{n}} k(x, y) h(y) d y, \quad h \in L^{2}\left(\mathbb{R}^{n}, \mathbb{R}\right).
\end{equation}
Then the process
\begin{equation*}
W(t, x, \omega):=\sum_{k=1}^{\infty} \beta_{k}(t, \omega) \Phi e_{k}(x), \quad t \geq 0, \quad x \in \mathbb{R}^{n}, \quad \omega \in \Omega,
\end{equation*}
is a Wiener process on $L^{2}\left(\mathbb{R}^{n}, \mathbb{R}\right)$ with covariance operator $\Phi \Phi^{*}$, and  the equation \eqref{orie07} can be rewritten as
\begin{equation}
i d u-\left[(-\Delta)^{\alpha} u+\lambda |u|^{2\sigma}u \right] d t=u d W-\frac{1}{2} i u F_{\Phi} d t,
\end{equation}
where the function $F_{\Phi}$ is given by
\begin{equation}
F_{\Phi}(x)=\sum_{k=1}^{\infty}\left(\Phi e_{k}(x)\right)^{2}, \quad x \in \mathbb{R}^{n}.
\end{equation}

We are interested in the global existence and uniqueness of the solution in a suitable energy space $H^{\alpha}$ and its stochastic symplecticity.  In contrast to the case of stochastic nonlinear Schr\"odinger equation in $L^2(\mathbb{R}^{n})$. There are three essential difficulties in our problems. The first difficulty is that the Sobolev embedding $H^{\alpha_2}(\mathbb{R}^{n})\hookrightarrow H^{\alpha_1}(\mathbb{R}^{n})$ with $\alpha_2>\alpha_1$ is not compact in $\mathbb{R}^n$. The second difficulty is that the fractional operator $(-\Delta)^{\alpha}$ is non-local and hence deriving the uniform boundedness of the energy $H[u]$ is much more involved than the standard Laplacian operator under the assumption condition $ \sum_{k=1}^{\infty}\int_{\mathbb{R}^n}\frac{\left(\Phi e_k(x)-\Phi e_k(y)\right)^2}{|x-y|^{n+2\alpha}}dx \leq \mathcal{R}$. The third difficulty is that we need to provide an ``appropriate" decomposition of the fractional Laplace operator and prove the preservation of symplecticity for the exact solution.

This paper is organized as follows. In Section 2, we introduce notations for the coefficients required by our main result and some definitions. Section 3 is dedicated to the existence of a global solution of equation \eqref{orie07} in the energy space $H^{\alpha}$. In Section 4, we demonstrate that the stochastic fractional nonlinear Schr\"odinger equation is an infinite-dimensional Hamiltonian system and analyze its stochastic multi-symplectic structure. A numerical test is conducted to validate the efficiency. Section 5 will be reserved for some concluding remarks. Some tedious proofs of lemmas are left in Appendix A.

\section{Preliminaries}
Throughout this paper, we use some notations. The capital letter $C$ represents a positive constant, with its value possibly changing from one line to the next. The notation $C_p$ is specifically used to indicate that the constant depends on the parameter $p$. For $p\geq 2$, $L^{p}$ denotes the Lebesgue space of complex-valued functions. The inner product in $L^{2}\left(\mathbb{R}^{n}\right)$ is defined as
\begin{equation*}
(f, g)={\bf{Re}} \int_{\mathbb{R}^{n}} f(x) \bar{g}(x) dx, \quad \text{for} ~~f, g \in L^{2}\left(\mathbb{R}^{n}\right).
\end{equation*}

Given a Banach space $B$, we denote by $R(H; B)$ the $\gamma$-radonifying operator from $H$ into $B$ \cite{NN7}, equipped with the norm
\begin{equation*}
\|K\|^2_{R(H,B)}=\widetilde{\mathbb{E}}\left|\sum_{k=1}^{\infty}\gamma_kKe_k\right|^2_{B},
\end{equation*}
where $(e_k)_{k\in \mathbb{N}}$ is any orthonormal basis of $H$, and $(\gamma_k)_{k\in \mathbb{N}}$ is any sequence of independent normal real-valued random variables on a probability space $\left(\widetilde{\Omega}, \widetilde{\mathcal{F}}, \widetilde{P} \right)$. $\widetilde{E}$ is the expectation on $\left(\widetilde{\Omega}, \widetilde{\mathcal{F}}, \widetilde{P} \right)$.

We define the standard space $H^{\alpha}\left(\mathbb{R}^n\right)$ for tempered distributions $u \in$ $\mathcal{S}^{\prime}\left(\mathbb{R}^n\right)$ with Fourier transform $\hat{u}$ satisfying $\left(1+|\xi|^2\right)^{{\alpha}/ 2} \hat{u} \in L^2\left(\mathbb{R}^n\right)$. We may use the abbreviated notation $L^r\left(0, T ; L_x^p\right)$ for $L^r\left([0, T] ; L^p\left(\mathbb{R}^n\right)\right)$, or even $L_T^r\left(L_x^p\right)$ when the interval $[0, T]$ is specified and fixed. Given two separable Hilbert spaces $H$ and $\widetilde{H}$, the notation $L_{HS}(H,\widetilde{H})$ denotes the space of Hilbert-Schmidt operators from $H$ into $\widetilde{H}$. Let $\Phi: H\rightarrow \widetilde{H} $ be a bounded linear operator. The operator $\Phi$ is called the Hilbert-Schmidt operator if there is an orthonormal basis $(e_{k})_{k\in \mathbb{N}^+}$ in $H$ such that
\begin{equation*}
\|\Phi\|^2_{L_{HS}(H,\widetilde{H})}=\operatorname{tr} \Phi^* \Phi=\sum_{k=1}^{\infty}\left|\Phi e_k\right|_{\widetilde{H}}^2< \infty.
\end{equation*}
When $H =\widetilde{H}=L^2(\mathbb{R}^n; \mathbb{R})$, $L_{HS}\left(L^2(\mathbb{R}^n; \mathbb{R}), L^2(\mathbb{R}^n; \mathbb{R})\right)$ is simply denoted by $L_{HS}$. Note that if $K$ is a Hilbert space such that $K \subset E$ with a continuous embedding then $L_{HS}(H, K) \subset R(H, E)$ with a continuous embedding.

We also use $\nabla$ to denote the gradient operator in Euclidean space. Let $L(E; F)$ denote the space of bounded linear operators from $E$ into $F$. Let $(m,p)$ be in $\mathbb{N}\times [1,\infty)$. The Banach space $W^{m,p}(\Omega)$ consists of measurable functions $u$ on $\Omega$ such that $\nabla^{\beta}u\in L^p(\Omega)$ in the sense of distributions, for every multi-index $\beta$ with $|\beta| \leq m$. $W^{m,p}(\Omega)$ is equipped with the norm
\begin{equation*}
\|u\|_{W^{m,p}}=\sum_{|\beta|\leq m}\|\nabla^{\beta}u\|_{L^p}.
\end{equation*}
The space $\mathcal{C}^{1,1}_{loc}\cap \mathcal{L}_{\alpha}$ consists of measurable functions $u$ on $\mathbb{R}^n$ such that $u\in L^{1}_{loc}$ and $\int_{\mathbb{R}^n}\frac{|u(x)|}{1+|x|^{n+2\alpha}}dx < \infty$.

In the following, we present criteria for determining if a stochastic system is a stochastic Hamiltonian system, which comes from \cite{GM02, WP19}.
\begin{definition}
For the $2n$-dimensional stochastic differential equation in Stratonovich sense
\begin{equation}
\left\{
\begin{aligned}
dp&=f(t,p,q)dt+\sum_{k=1}^{r}\sigma_k(t,p,q)\circ dW_k(t), ~~p(t_0)=p_0, \\
dq&=g(t,p,q)dt+\sum_{k=1}^{r}\gamma_k(t,p,q)\circ dW_k(t), ~~q(t_0)=q_0, \\
\end{aligned}
\right.
\end{equation}
if there are functions $H(t,p,q)$ and $H_k(t,p,q)(k=1,2, \cdots, r)$ such that
\begin{equation}
\begin{aligned}
f(t,p,q)&=-\frac{\partial H(t,p,q)}{\partial q}^{T}, ~~~\sigma_k(t,p,q)=-\frac{\partial \widetilde{H}_k(t,p,q)}{\partial q}^{T}, \\
g(t,p,q)&=\frac{\partial H(t,p,q)}{\partial p}^{T}, ~~~\gamma_k(t,p,q)=\frac{\partial \widetilde{H}_k(t,p,q)}{\partial p}^{T}, \\
\end{aligned}
\end{equation}
for $k=1, \cdots, r$, then it is a stochastic Hamiltonian system.
\end{definition}
Here, we will discuss fractional derivatives \cite{Duan15}. The fractional Laplace $(-\Delta)^{\alpha}$ is defined as
\begin{equation*}
(-\Delta)^{\alpha}u(x)=C(n,\alpha)~ \mathrm{P.V.}\int_{\mathbb{R}^n}\frac{u(x)-u(y)}{|x-y|^{n+2\alpha}}dy,\\
\end{equation*}
where $\mathrm{P.V.}$ denotes the principal value of the integral, and $C(n,\alpha)$ is a positive constant given by
\begin{equation*}
\begin{aligned}
C(n,\alpha)=\frac{2^{2\alpha}\alpha\Gamma(\frac{n+2\alpha}{2})}{\pi^{\frac{n}{2}}\Gamma(1-\alpha)}.
\end{aligned}
\end{equation*}
For every $\alpha\in(0,1)$, denote by
\begin{equation*}
\begin{aligned}
H^{\alpha}(\mathbb{R}^n)=\left\{u\in L^2(\mathbb{R}^n): \int_{\mathbb{R}^n}\int_{\mathbb{R}^n}\frac{|u(x)-u(y)|^2}{|x-y|^{n+2\alpha}}dxdy < \infty \right\}.
\end{aligned}
\end{equation*}
Then $H^{\alpha}(\mathbb{R}^n)$ is a Hilbert space with inner product given by
\begin{equation*}
(u,v)_{H^{\alpha}(\mathbb{R}^n)}=(u,v)_{L^{2}(\mathbb{R}^n)}+\mathbf{Re} \int_{\mathbb{R}^n}\int_{\mathbb{R}^n}\frac{\left(u(x)-u(y)\right)\left(\bar{v}(x)-\bar{v}(y)\right)}{|x-y|^{n+2\alpha}}dxdy,\quad u,v\in H^{\alpha}(\mathbb{R}^n).
\end{equation*}
In the following, we will also give the notation
\begin{equation*}
(u,v)_{\dot{H}^{\alpha}(\mathbb{R}^n)}=\mathbf{Re}\int_{\mathbb{R}^n}\int_{\mathbb{R}^n}\frac{\left(u(x)-u(y)\right)\left(\bar{v}(x)-\bar{v}(y)\right)}{|x-y|^{n+2\alpha}}dxdy,\quad u, v\in H^{\alpha}(\mathbb{R}^n),
\end{equation*}
and
\begin{equation*}
\|u\|^2_{\dot{H}^{\alpha}(\mathbb{R}^n)}=\int_{\mathbb{R}^n}\int_{\mathbb{R}^n}\frac{\left|u(x)-u(y)\right|^2}{|x-y|^{n+2\alpha}}dxdy,\quad u\in H^{\alpha}(\mathbb{R}^n).
\end{equation*}
Then we have
\begin{equation*}
(u,v)_{H^{\alpha}(\mathbb{R}^n)}=(u,v)_{L^2(\mathbb{R}^n)}+(u,v)_{\dot{H}^{\alpha}(\mathbb{R}^n)},\quad u, v\in H^{\alpha}(\mathbb{R}^n),
\end{equation*}
and
\begin{equation*}
\|u\|^2_{H^{\alpha}(\mathbb{R}^n)}=\|u\|^2_{L^2(\mathbb{R}^n)}+\|u\|^2_{\dot{H}^{\alpha}(\mathbb{R}^n)},\quad u\in H^{\alpha}(\mathbb{R}^n).
\end{equation*}
By \cite{EG12}, we know that
\begin{equation*}
\|(-\Delta)^{\frac{\alpha}{2}}u\|^{2}_{L^{2}(\mathbb{R}^n)}=\frac{1}{2}C(n,\alpha)\|u\|^{2}_{\dot{H}^{\alpha}(\mathbb{R}^n)},
\end{equation*}
and the norm $\|u\|_{H^{\alpha}\left(\mathbb{R}^n\right)}$ is equivalent to the norm $\left(\|u\|_{L^2\left(\mathbb{R}^n\right)}^2+\left\|(-\Delta)^{\frac{\alpha}{2}} u\right\|_{L^2\left(\mathbb{R}^n\right)}^2\right)^{\frac{1}{2}}$ for any $u \in H^{\alpha}\left(\mathbb{R}^n\right)$.

\section{Global Existence}
This section focuses on studying the global existence of the stochastic fractional nonlinear Schr\"odinger equation \eqref{orie07} with radially symmetric initial data in the energy space $H^{\alpha}$. Let us first recall the definition of an admissible pair. We say a pair $(p, q)$ satisfies the fractional admissible condition if
\begin{equation}
  p \in[2, \infty], \quad q \in[2, \infty), \quad(p, q) \neq\left(2, \frac{4 n-2}{2 n-3}\right), \quad \frac{2\alpha}{p}+\frac{n}{q}=\frac{n}{2}.
\end{equation}

The unitary group $S(t):=e^{-i t(-\Delta)^{\alpha}}$ enjoys several types of Strichartz estimates, for instance, non-radial Strichartz estimates, radial Strichartz estimates, and weighted Strichartz estimates. We only recall here radial Strichartz estimates (see, e.g., Ref. \cite{VDD18}).
\begin{lemma}
\label{0estimate}
(Radial Strichartz estimates) For $n \geq 2$ and $\frac{n}{2 n-1} \leq \alpha<1$, there exists a positive constant $C$ such that the following estimates hold:
  \begin{equation}\label{Strichartz}
  \begin{aligned}
  \left\|e^{-i t(-\Delta)^{\alpha}}\psi\right\|_{L^p\left(\mathbb{R}, L^q\right)} &\leq C\|\psi\|_{L^2},\\
  \left\|\int_{0}^{t} S(t-s) f(s) d s\right\|_{L^{p}\left(\mathbb{R}, L^{q}\right)} & \leq C \|f\|_{L^{\beta^{\prime}}\left(\mathbb{R}, L^{\gamma^{\prime}}\right)},
  \end{aligned}
  \end{equation}
  where $\psi$ and $f$ are radially symmetric, $(p, q)$ and $(\beta, \gamma)$ satisfy the fractional admissible condition, and $\frac{1}{\beta^{\prime}}+\frac{1}{\beta}=\frac{1}{\gamma^{\prime}}+\frac{1}{\gamma}=1$.
\end{lemma}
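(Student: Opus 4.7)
The plan is to follow the standard Keel--Tao abstract Strichartz machinery, whose only problem-specific input is a dispersive estimate for $S(t)=e^{-it(-\Delta)^{\alpha}}$ acting on radially symmetric data. First I would perform a Littlewood--Paley decomposition $\psi=\sum_{N\in 2^{\mathbb{Z}}}P_{N}\psi$ and, using the scaling $(-\Delta)^{\alpha}\mapsto N^{2\alpha}(-\Delta)^{\alpha}$, reduce matters to frequency-localized pieces at unit scale. For a radial function $\psi(x)=\widetilde{\psi}(|x|)$, the Fourier representation collapses to a one-dimensional Hankel transform
\[
(S(t)P_{N}\psi)(x)=c_{n}\,|x|^{-\frac{n-2}{2}}\int_{0}^{\infty}e^{-itr^{2\alpha}}J_{\frac{n-2}{2}}(r|x|)\,\widetilde{P_{N}\psi}(r)\,r^{n/2}\,dr,
\]
so the dispersive estimate reduces to bounding the oscillatory kernel $K_{N}(t,|x|,|y|)$ obtained from $S(t)S(t)^{*}$ on radial functions.

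The heart of the argument is a refined stationary-phase analysis of this Hankel kernel, split into the regimes $r|x|\lesssim 1$ and $r|x|\gg 1$. In the latter regime one replaces the Bessel function by its asymptotic $J_{\nu}(\rho)\sim\rho^{-1/2}\cos(\rho-\nu\pi/2-\pi/4)$, which contributes an additional decay factor of $|x|^{-(n-1)/2}$ compared with the non-radial case; this is precisely the extra gain from averaging over spheres. The remaining one-dimensional oscillatory integral in $r$ has phase $-tr^{2\alpha}\pm r(|x|\pm|y|)$ with non-degenerate stationary points (since $\alpha<1$), so van der Corput's lemma yields a gain of $|t|^{-1/(2\alpha)}$. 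Summing dyadic pieces produces a radial dispersive bound of the form $\|S(t)P_{N}\psi\|_{L^{\infty}}\lesssim |t|^{-\gamma(\alpha,n)}\|P_{N}\psi\|_{L^{1}}$, where the threshold $\alpha\geq n/(2n-1)$ is exactly what makes the resulting decay exponent compatible with the scaling identity $\frac{2\alpha}{p}+\frac{n}{q}=\frac{n}{2}$ in the Keel--Tao scheme.

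Once this dispersive bound is established, the homogeneous estimate $\|S(t)\psi\|_{L^{p}(\mathbb{R},L^{q})}\lesssim\|\psi\|_{L^{2}}$ follows by interpolating against the trivial unitary bound $\|S(t)\psi\|_{L^{2}}=\|\psi\|_{L^{2}}$ and applying the $TT^{*}$ argument together with Hardy--Littlewood--Sobolev. The inhomogeneous estimate is then obtained by the same duality, using the Christ--Kiselev lemma to pass from the full-line convolution to the causal integral $\int_{0}^{t}$; this is exactly the step that forces the exclusion of the double endpoint $(p,q)=(2,(4n-2)/(2n-3))$, since the Keel--Tao argument at that single point requires a bilinear refinement that is not available here.

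The hard part is undoubtedly the radial dispersive estimate. The non-radial analogue is known to lose derivatives for $\alpha\neq 1$, so the bound cannot be black-boxed from the classical Schr\"odinger case; it must genuinely exploit the extra decay of Bessel asymptotics together with a careful tracking of the $\alpha$-dependent exponents through the frequency decomposition. The remaining routine steps (Littlewood--Paley reassembly, $TT^{*}$, Christ--Kiselev) are by now completely standard and, once the dispersive estimate is granted, yield the stated inequalities directly.
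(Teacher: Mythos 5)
The paper does not prove this lemma at all: it is explicitly ``recalled'' from the literature (the text points to Dinh \cite{VDD18}, which in turn rests on the radial Strichartz estimates of Guo--Wang/Cho--Lee type), so there is no in-paper argument to compare yours against. Your outline does follow the standard route used in that literature --- Littlewood--Paley reduction, Hankel-transform representation of the radial propagator, Bessel asymptotics for the extra $|x|^{-(n-1)/2}$ decay, one-dimensional stationary phase in the radial frequency, and then a $TT^{*}$/Christ--Kiselev assembly --- and your identification of $\alpha\ge n/(2n-1)$ as the threshold and of $(2,\tfrac{4n-2}{2n-3})$ as the excluded endpoint (which is precisely the $p=2$ point at $\alpha=\tfrac{n}{2n-1}$) is consistent with how those references present the result.

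That said, as a proof your proposal has a genuine gap at exactly the step you flag as ``the heart of the argument.'' A uniform frequency-localized dispersive bound of the form $\|S(t)P_{N}\psi\|_{L^{\infty}}\lesssim|t|^{-\gamma(\alpha,n)}\|P_{N}\psi\|_{L^{1}}$ with an \emph{improved} decay exponent cannot hold for radial data: at $x=0$ the Bessel-asymptotic gain $|x|^{-(n-1)/2}$ disappears and one is left with the same stationary-phase bound as in the non-radial case, which after rescaling to frequency $N$ carries the usual derivative loss. The actual proofs therefore do not produce a global $L^{1}\to L^{\infty}$ estimate; they establish annulus-localized or weighted $L^{2}$ estimates (distinguishing $r|x|\lesssim1$ from $r|x|\gg1$, as you begin to do) and sum these in a way that never passes through a pointwise dispersive bound, after which the Keel--Tao interpolation must be replaced by a direct $TT^{*}$ computation on the localized pieces. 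Your sketch names the right regimes but then collapses them into a dispersive estimate that is false as stated, and the subsequent bookkeeping of the $\alpha$-dependent exponents --- which is where the admissibility line $\tfrac{2\alpha}{p}+\tfrac{n}{q}=\tfrac{n}{2}$ and the constraint on $\alpha$ actually emerge --- is asserted rather than carried out. Since the paper itself black-boxes the lemma, citing \cite{VDD18} is the appropriate resolution; if you intend to give a self-contained proof, the localized dispersive analysis is the part that must be written out.
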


\begin{assumption}\label{assump1}(On the noise)
We assume that $\Phi: L^2 \rightarrow H^{\alpha}$ is a Hilbert-Schmidt operator, i.e.
\begin{equation*}
\|\Phi\|_{L_{H S}(L^2,  H^{\alpha})}:=\left(\sum_{k=1}^{\infty}\left\|\Phi e_k\right\|_{ H^{\alpha}}^2\right)^{1 / 2}<\infty .
\end{equation*}

This implies that
\begin{equation*}
\|\Phi\|_{L_{H S}(L^2;  H^{\alpha})}^2=\sum_{k=1}^{\infty}\left\|\Phi e_k\right\|^2+\sum_{k=1}^{\infty}\left\|(-\Delta)^{\frac{\alpha}{2}} \Phi e_k\right\|^2.
\end{equation*}
\end{assumption}

\begin{assumption}\label{assump2}(On the nonlinearity)
\begin{enumerate}
 \item [\textcolor{black}{$\bullet$}] If $\lambda=-1$ (focusing), let $0 \leq \sigma<\frac{2\alpha}{n}$.
 \item [\textcolor{black}{$\bullet$}] If $\lambda=1$ (defocusing), let $\begin{cases}0 \leq \sigma<\frac{2\alpha}{n-2\alpha}, & \text { for } n \geq 2, \\ \sigma \geq 0, & \text { for } n < 2.\end{cases}$
 \end{enumerate}
\end{assumption}

 We recall the following fractional chain rule \cite{CW91},  which is needed in the local well-posedness for \eqref{orie07}.
\begin{lemma}
\label{lemm1}
Let $F \in C^1(\mathbb{C}, \mathbb{C})$ and $\alpha \in(0,1)$. Then for $1<q \leq q_2<\infty$ and $1<q_1 \leq \infty$ satisfying $\frac{1}{q}=\frac{1}{q_1}+\frac{1}{q_2}$, there exists a positive constant $C$ such that
\begin{equation}
\left\||\nabla|^{\alpha}F(u)\right\|_{L^q} \leq C\left\|F^{\prime}(u)\right\|_{L^{q_1}}\left\||\nabla|^{\alpha}u\right\|_{L^{q_2}} .
\end{equation}
\end{lemma}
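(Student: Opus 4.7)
My plan is to reduce the statement to a pointwise Christ--Weinstein-type inequality and then close via Hardy--Littlewood maximal function bounds together with H\"older's inequality. I would begin by combining the singular integral representation
\[
|\nabla|^{\alpha} g(x) = c_{n,\alpha}\,\mathrm{P.V.}\!\int_{\mathbb{R}^n}\frac{g(x)-g(y)}{|x-y|^{n+\alpha}}\,dy, \qquad \alpha\in(0,1),
\]
applied to $g=F(u)$, with the identity
\[
F(u(x))-F(u(y)) = (u(x)-u(y))\int_0^1 F'\bigl(u(y)+\tau(u(x)-u(y))\bigr)\,d\tau,
\]
which is available since $F\in C^1(\mathbb{C},\mathbb{C})$. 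Writing $u_\tau(x,y)=u(y)+\tau(u(x)-u(y))$, this yields the pointwise bound
\[
\bigl||\nabla|^{\alpha} F(u)(x)\bigr| \leq c_{n,\alpha}\int_0^1\!\!\int_{\mathbb{R}^n} \frac{|F'(u_\tau(x,y))|\,|u(x)-u(y)|}{|x-y|^{n+\alpha}}\,dy\,d\tau.
\]

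Next, I would split $\mathbb{R}^n$ into dyadic annuli $A_k=\{2^k\leq|x-y|<2^{k+1}\}$ around $x$. On $A_k$ the kernel is essentially $2^{-k(n+\alpha)}$, so averaging $|F'(u_\tau)|$ against it produces (uniformly in $\tau\in[0,1]$) a constant times the Hardy--Littlewood maximal function $M(F'(u))(x)$, while the geometric sum of $|u(x)-u(y)|$ weighted by $2^{-k\alpha}$ reproduces an auxiliary maximal quantity $\mathcal{M}^{\alpha} u(x)$ satisfying $\|\mathcal{M}^{\alpha} u\|_{L^{q_2}}\lesssim \||\nabla|^{\alpha} u\|_{L^{q_2}}$ for $1<q_2<\infty$. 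Combined, these two estimates give the key pointwise inequality
\[
\bigl||\nabla|^{\alpha} F(u)(x)\bigr| \lesssim M\bigl(F'(u)\bigr)(x)\cdot \mathcal{M}^{\alpha} u(x).
\]
Applying H\"older's inequality with the exponent pair $(q_1,q_2)$ together with the $L^{q_1}$-boundedness of $M$ (which requires exactly $q_1>1$; the endpoint $q_1=\infty$ is handled separately by absorbing $\|F'(u)\|_{L^\infty}$ directly) then closes the argument.

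The main obstacle is the second step, specifically the identification of the maximal operator $\mathcal{M}^{\alpha}$ and the bound $\|\mathcal{M}^{\alpha} u\|_{L^{q_2}}\lesssim \||\nabla|^{\alpha} u\|_{L^{q_2}}$. This is the technical core of Christ--Weinstein's argument; it is typically carried out via a Littlewood--Paley decomposition of $u$, with low-frequency contributions controlled by the Hardy--Littlewood maximal function of $u$ and high-frequency contributions controlled by the Littlewood--Paley square-function characterisation of $\||\nabla|^{\alpha} u\|_{L^{q_2}}$. The restriction $1<q_2<\infty$ is precisely what is needed to make both the vector-valued maximal and square-function inequalities available, and avoiding any loss in the relation $1/q=1/q_1+1/q_2$ is what makes the estimate sharp.
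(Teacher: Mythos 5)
The paper does not actually prove this lemma: it is quoted directly from Christ--Weinstein \cite{CW91}, so there is no in-paper argument to compare against. Judged on its own terms, your sketch assembles the right ingredients (the fundamental-theorem-of-calculus factorization of $F(u(x))-F(u(y))$, maximal functions, H\"older with $\frac{1}{q}=\frac{1}{q_1}+\frac{1}{q_2}$), but the step you present as routine is exactly where the known proofs have to work, and as written it does not go through.

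Concretely, there are three gaps. First, the passage from
\[
\int_{A_k}\frac{|F'(u_\tau(x,y))|\,|u(x)-u(y)|}{|x-y|^{n+\alpha}}\,dy
\]
to the product of $M(F'(u))(x)$ with an average of $|u(x)-u(\cdot)|$ is not legitimate: you cannot replace each factor of a product by its own average over the annulus. Any honest separation (Cauchy--Schwarz on $A_k$) produces the $L^2$-maximal function $M(|F'(u)|^2)^{1/2}$, which is bounded on $L^{q_1}$ only for $q_1>2$ and so loses part of the stated range. Second, $F'(u_\tau(x,y))$ is $F'$ evaluated at the convex combination $u(y)+\tau(u(x)-u(y))$, which is not a value of $F'\circ u$ at any point; its average in $y$ is not controlled by $M(F'(u))(x)$ for a general $C^1$ function. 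One needs the structural hypothesis $|F'(a+\tau(b-a))|\lesssim |F'(a)|+|F'(b)|$, which is how Christ--Weinstein actually state the result; it holds for the power nonlinearity $F(u)=|u|^{2\sigma}u$ used in this paper, but not for arbitrary $F\in C^1$. Third, and most seriously, your $\mathcal{M}^{\alpha}u(x)$ is an $\ell^1$ sum over dyadic scales of averaged differences, i.e.\ a Besov $B^{\alpha}_{q_2,1}$-type quantity, and the claimed bound $\|\mathcal{M}^{\alpha}u\|_{L^{q_2}}\lesssim\||\nabla|^{\alpha}u\|_{L^{q_2}}$ is false in general: what $\||\nabla|^{\alpha}u\|_{L^{q_2}}$ controls is the corresponding square function ($\ell^2$ in the scales, the Strichartz--Dorronsoro characterization). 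Preserving that $\ell^2$ structure through the nonlinearity is precisely why the Christ--Weinstein argument is run through a Littlewood--Paley decomposition of $F(u)$ (estimating $P_jF(u)$ via $F(u)-F(P_{<j}u)$ and the telescoping trick) rather than through the singular-integral kernel of $|\nabla|^{\alpha}$. If you want a self-contained proof, that is the route to take.
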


Because the stochastic integral $\int_0^{t} S(t-s)(u(s) d W(s))$ is not a local martingale, we can not directly use Burkholder's inequality. We need the following lemma, which is a straightforward generalization of Lemma 3.1 and Lemma 3.2 in \cite{Debussche99}.
\begin{lemma}\label{lemm2}
 Assume that a pair $(p, q)$ satisfies the fractional admissible condition, $q<\frac{2 n}{n-\alpha}$, $T_0>0$, and take $\rho \geq p$. Under Assumption \ref{assump1}, for any $u$ radially adapted process in $L^\rho\left(\Omega ; L^p\left(0, T_0 ; W^{\alpha, q}\right)\right) \cap L^\rho\left(\Omega ; L^{\infty}\left(0, T_0 ; H^{\alpha}\right)\right)$ if $Iu$ is defined for $t_0, t \in\left[0, T_0\right]$ by
\begin{equation*}
I u\left(t_0, t\right)=\int_0^{t_0} S(t-s)(u(s) d W(s)),
\end{equation*}
where $S(t):=e^{-i t(-\Delta)^{\alpha}}$.
Then for any stopping time $\tau$ with $\tau<T_0$ almost surely,
\begin{equation*}
\mathbb{E}\left(\sup _{0 \leq t_0 \leq \tau}\left|I u\left(t_0, \cdot\right)\right|_{L^p\left(0, \tau ; W^{\alpha, q}\right)}^\rho\right) \leq C\left(\rho, T_0,\|\Phi\|_{L_{H S}(L^2,  H^{\alpha})}\right) T_0^\delta \mathbb{E}\left(|u|_{L^{\infty}\left(0, \tau ; H^{\alpha}\right)}^\rho\right)
\end{equation*}
with $\delta>0$.
Furthermore, if we set $J u(t)=I u(t, t)$ then $J u$ has a modification in $L^\rho\left(\Omega ; L^p\left(0, T_0 ; W^{\alpha, q}\right) \cap C\left(\left[0, T_0\right] ; H^{\alpha}\right)\right)$ and
\begin{equation*}
\mathbb{E}\left(|J u|_{L^p\left(0, \tau ; W^{\alpha, q}\right)}^\rho\right) \leq C\left(\rho, T_0,\|\Phi\|_{L_{H S}(L^2,  H^{\alpha})}\right) T_0^\delta \mathbb{E}\left(|u|_{L^{\infty}\left(0, \tau ; H^{\alpha}\right)}^\rho\right)
\end{equation*}
and
\begin{equation*}
\mathbb{E}\left(\sup _{0 \leq t \leq \tau}|J u(t)|_{H^{\alpha}}^\rho\right) \leq C\left(\rho, T_0,\|\Phi\|_{L_{H S}(L^2,  H^{\alpha})}\right) T_0^\delta \mathbb{E}\left(|u|_{L^p\left(0, \tau ; W^{\alpha, q}\right)}^\rho\right).
\end{equation*}
\end{lemma}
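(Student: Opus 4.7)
The plan is to follow closely the template of Lemmas 3.1 and 3.2 of Bouard--Debussche, adapted to the fractional setting by replacing the Schr\"odinger group with $S(t)=e^{-it(-\Delta)^{\alpha}}$ and its associated Strichartz estimates by those of Lemma \ref{0estimate}. The three main ingredients will be: (i) the radial Strichartz estimates in both the homogeneous and inhomogeneous forms of Lemma \ref{0estimate}; (ii) a Burkholder--Davis--Gundy (BDG) inequality formulated in the $\gamma$-radonifying framework of $R(H;B)$, together with the embedding $L_{HS}(H,K)\hookrightarrow R(H,E)$ mentioned in Section 2; and (iii) the Da Prato--Kwapie\'n--Zabczyk factorization method to convert the "diagonal" stochastic convolution $Ju(t)=Iu(t,t)$ into a deterministic convolution of a stochastic integral.

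For the first bound, fix $t_{0}\in[0,T_0]$ and regard $Iu(t_{0},\cdot)$ as a stochastic integral with values in the Banach space $L^{p}(0,T_{0};W^{\alpha,q})$. Applying the BDG inequality in the $\gamma$-radonifying norm, combined with the dual Strichartz estimate (the second inequality of Lemma \ref{0estimate}) applied pathwise to $s\mapsto S(\cdot-s)(u(s)\Phi e_{k})$, one obtains a bound of the form
\begin{equation*}
\mathbb{E}\left(\sup_{0\le t_{0}\le\tau}\|Iu(t_{0},\cdot)\|_{L^{p}(0,\tau;W^{\alpha,q})}^{\rho}\right)\le C\,\mathbb{E}\left(\int_{0}^{\tau}\|u(s)\Phi\|_{L_{HS}(L^{2},H^{\alpha})}^{2}\,ds\right)^{\rho/2}.
\end{equation*}
To convert this into $\|u\|_{L^{\infty}(0,\tau;H^{\alpha})}$ I will estimate $\|u(s)\Phi e_{k}\|_{H^{\alpha}}$ by splitting $\|\cdot\|_{H^{\alpha}}^{2}=\|\cdot\|_{L^{2}}^{2}+\|(-\Delta)^{\alpha/2}(\cdot)\|_{L^{2}}^{2}$ and using Assumption \ref{assump1}; the $(-\Delta)^{\alpha/2}(u\Phi e_{k})$ factor will be distributed via a fractional Leibniz rule (or Lemma \ref{lemm1} applied to the product), and a H\"older estimate together with the strict inequality $q<\frac{2n}{n-\alpha}$ yields the crucial power $T_{0}^{\delta}$ with $\delta>0$.

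For the two bounds on $Ju(t)=Iu(t,t)$, I will apply the factorization identity
\begin{equation*}
Ju(t)=\frac{\sin(\pi\beta)}{\pi}\int_{0}^{t}(t-\sigma)^{\beta-1}S(t-\sigma)Y_{\beta}(\sigma)\,d\sigma,\quad Y_{\beta}(\sigma)=\int_{0}^{\sigma}(\sigma-s)^{-\beta}S(\sigma-s)(u(s)\,dW(s)),
\end{equation*}
valid for any $\beta\in(1/p,1/2)$. The stochastic integrand $Y_{\beta}$ is estimated in $L^{\rho}(\Omega;L^{r}(0,T_{0};H^{\alpha}))$ by BDG exactly as in Step 1 (the singular kernel $(\sigma-s)^{-\beta}$ is integrable in $s$), while the remaining deterministic convolution is handled by the inhomogeneous Strichartz estimate of Lemma \ref{0estimate} applied to $(t-\sigma)^{\beta-1}Y_{\beta}(\sigma)$. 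Combining these two steps and optimizing in $\beta$ yields continuity of $Ju$ into $L^{p}(0,\tau;W^{\alpha,q})$ and into $C([0,T_{0}];H^{\alpha})$, the pathwise modification, and the announced $T_{0}^{\delta}$-type estimates.

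The main obstacle is the control of $\|(-\Delta)^{\alpha/2}(u\Phi e_{k})\|_{L^{2}}$, since the fractional Laplacian does not satisfy an exact Leibniz rule; I expect to handle this via Lemma \ref{lemm1} together with a Kato--Ponce-type commutator estimate, distributing one half-derivative on $u$ and one on $\Phi e_{k}$ and absorbing the resulting mixed norms by Assumption \ref{assump1} and the Sobolev embedding forced by $q<\frac{2n}{n-\alpha}$. The second technical point is the simultaneous application of BDG in a Banach (rather than Hilbert) target, which is the reason for working throughout in the $\gamma$-radonifying framework $R(H;B)$ introduced in Section 2.
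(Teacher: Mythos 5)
The paper does not actually prove this lemma: it simply declares it to be ``a straightforward generalization of Lemma 3.1 and Lemma 3.2 in \cite{Debussche99}'' and moves on. Your sketch --- Burkholder--Davis--Gundy in the $\gamma$-radonifying framework combined with the radial Strichartz estimates of Lemma \ref{0estimate}, a fractional Leibniz/product estimate to reduce to $\|u\|_{L^\infty(0,\tau;H^\alpha)}$ with the factor $T_0^\delta$ from H\"older in time, and the Da Prato--Kwapie\'n--Zabczyk factorization for the diagonal convolution $Ju$ --- is exactly that standard de Bouard--Debussche argument transplanted to $S(t)=e^{-it(-\Delta)^\alpha}$, so it coincides with the approach the paper intends.
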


Next, we give the local well-posedness of the stochastic fractional nonlinear Schr\"odinger equation \eqref{orie07} with radially symmetric initial data in the energy space $H^{\alpha}$.
\begin{theorem}(Radial local theory).
\label{theorem1}
 Let $n \geq 2$ and $\alpha \in\left[\frac{n}{2 n-1}, 1\right)$ and $0<\sigma<\frac{2 \alpha}{n-2 \alpha}$. Let
\begin{equation*}
p=\frac{4 \alpha(\sigma+2)}{\sigma(n-2 \alpha)}, \quad q=\frac{n(\sigma+2)}{n+\sigma \alpha}. \end{equation*}
Then under Assumption \ref{assump1}, for any $u_0 \in H^{\alpha}$ radial, there exists a stopping time $\tau^*\left(u_0, \omega\right)$ and a unique solution to \eqref{orie07} starting from $u_0$, which is almost surely in $C\left([0, \tau] ; H^{\alpha}\right) \cap L^p\left(0, \tau ; W^{\alpha, q}\right)$ for any $\tau<\tau^*\left(u_0\right)$. Moreover, we have almost surely,
\begin{equation*}
\tau^*\left(u_0, \omega\right)=+\infty \quad \text { or } \quad \limsup _{t\to\tau^*\left(u_0, \omega\right)}|u(t)|_{H^{\alpha}}=+\infty.
\end{equation*}
\end{theorem}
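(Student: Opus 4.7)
\bigskip

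\noindent\textbf{Proof proposal.} The plan is to set up a fixed-point argument on the Duhamel (mild) formulation
\begin{equation*}
\mathcal{T}(u)(t)=S(t)u_0-i\int_0^t S(t-s)\!\left[\lambda|u|^{2\sigma}u+\tfrac{i}{2}uF_\Phi\right]\!ds-i\int_0^t S(t-s)u(s)\,dW(s),
\end{equation*}
working in the radially symmetric subspace of $X_T:=L^\rho(\Omega;C([0,T];H^\alpha)\cap L^p(0,T;W^{\alpha,q}))$ for a small (random/deterministic-after-truncation) $T>0$ and some $\rho\geq p$. First I would check that the prescribed exponents $(p,q)=\bigl(\tfrac{4\alpha(\sigma+2)}{\sigma(n-2\alpha)},\tfrac{n(\sigma+2)}{n+\sigma\alpha}\bigr)$ satisfy the fractional admissibility condition $\tfrac{2\alpha}{p}+\tfrac{n}{q}=\tfrac{n}{2}$ and that $q<\tfrac{2n}{n-\alpha}$ (the technical hypothesis of Lemma~\ref{lemm2}), as well as the Sobolev embedding $H^\alpha(\mathbb{R}^n)\hookrightarrow L^{\sigma+2}(\mathbb{R}^n)$, which holds exactly under Assumption~\ref{assump2} with $\sigma<\tfrac{2\alpha}{n-2\alpha}$. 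The radial symmetry is preserved by $S(t)$, by the nonlinear map $u\mapsto|u|^{2\sigma}u$, and (under our assumption that the kernel $k(x,y)$ is compatible) by the stochastic term, so the fixed-point space is invariant.

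Next I would estimate each piece of $\mathcal{T}(u)$ in the $X_T$-norm. The linear propagator $S(t)u_0$ is handled by the homogeneous radial Strichartz estimate of Lemma~\ref{0estimate}. For the deterministic Duhamel term the strategy is: apply the inhomogeneous radial Strichartz estimate with dual admissible pair $(\beta',\gamma')$ chosen so that H\"older's inequality in space combined with the fractional chain rule of Lemma~\ref{lemm1} yields
\begin{equation*}
\bigl\||\nabla|^\alpha(|u|^{2\sigma}u)\bigr\|_{L^{\gamma'}_x}\leq C\|u\|_{L^{q_1}_x}^{2\sigma}\bigl\||\nabla|^\alpha u\bigr\|_{L^q_x},
\end{equation*}
where $q_1$ is dictated by scaling so that $H^\alpha\hookrightarrow L^{q_1}$; then a H\"older argument in time distributes the $L^\infty_tH^\alpha$ and $L^p_tW^{\alpha,q}$ norms to produce a power $T^\delta$ in front with $\delta>0$ (this is where the strict inequality $\sigma<\tfrac{2\alpha}{n-2\alpha}$ is used). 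The lower-order It\^o--Stratonovich correction involving $F_\Phi$ is controlled by the same Strichartz chain once we bound $uF_\Phi$ using $\|F_\Phi\|_{L^\infty\cap W^{\alpha,\cdot}}\lesssim\|\Phi\|_{L_{HS}(L^2;H^\alpha)}^2$. The stochastic convolution $\int_0^\cdot S(\cdot-s)u\,dW(s)$ is exactly the object bounded by Lemma~\ref{lemm2}, giving a factor $T^\delta\mathbb{E}|u|_{L^\infty(0,T;H^\alpha)}^\rho$ in both the $L^p_tW^{\alpha,q}$ and $C_tH^\alpha$ norms.

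Combining the estimates, $\mathcal{T}$ maps a ball $B_R\subset X_T$ into itself and is a strict contraction, provided $T$ is small enough depending on $R$. To upgrade the small-data local result to arbitrary $u_0\in H^\alpha$, I would insert a smooth cut-off $\theta_R(\|u\|_{X_t})$ in front of the nonlinear and stochastic terms, run the contraction globally in time for the truncated equation (using that $\theta_R$ makes all the nonlinear estimates Lipschitz with a bound independent of $u$), and then define
$\tau^*(u_0,\omega)=\lim_{R\to\infty}\tau_R(u_0,\omega)$, where $\tau_R$ is the first time the $X_t$-norm of the truncated solution reaches $R$. Uniqueness transfers from the truncated problem to the original by a standard gluing argument. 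The blow-up alternative follows because if $\tau^*<\infty$ and $\limsup_{t\to\tau^*}\|u(t)\|_{H^\alpha}<\infty$, then a uniform-in-$R$ a priori bound allows one to restart the local theory at a time close to $\tau^*$ and extend past it, contradicting maximality.

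\emph{Main obstacle.} The delicate point is the bookkeeping of exponents: one must simultaneously satisfy the admissibility of $(\beta,\gamma)$, the scaling identity in the fractional chain rule, the Sobolev embedding for $q_1$, and the strict positivity of the time power $\delta$. The choice $p=\tfrac{4\alpha(\sigma+2)}{\sigma(n-2\alpha)}$, $q=\tfrac{n(\sigma+2)}{n+\sigma\alpha}$ is essentially forced by these requirements, and the condition $\alpha\in[\tfrac{n}{2n-1},1)$ is exactly what is needed for the radial Strichartz estimates of Lemma~\ref{0estimate} to apply at this pair. Once these exponents are confirmed compatible, the rest is a routine contraction-mapping plus stopping-time argument.
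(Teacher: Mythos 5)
Your proposal follows essentially the same route as the paper: a contraction-mapping argument on the Duhamel formulation in a ball of $C([0,\zeta];H^\alpha)\cap L^p(0,\zeta;W^{\alpha,q})$, with the linear and inhomogeneous terms controlled by the radial Strichartz estimates of Lemma~\ref{0estimate}, the nonlinearity by the fractional chain rule of Lemma~\ref{lemm1} together with H\"older in space and time (producing the power $|I|^\theta$ with $\theta>0$ from $\sigma<\tfrac{2\alpha}{n-2\alpha}$), and the stochastic convolution by Lemma~\ref{lemm2}. Your additional remarks on the truncation/stopping-time construction and the blow-up alternative fill in details the paper delegates to the reference \cite{AD03}, but the argument is the same.
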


\begin{proof}
It is easy to check that $(p, q)$ satisfies the fractional admissible condition. We choose $(a, b)$ so that
\begin{equation*}
\frac{1}{p^{\prime}}=\frac{1}{p}+\frac{2\sigma}{a}, \quad \frac{1}{q^{\prime}}=\frac{1}{q}+\frac{2\sigma}{b} .
\end{equation*}
We see that
\begin{equation*}
\frac{2\sigma}{a}-\frac{2\sigma}{p}=1-\frac{\sigma(n-2 \alpha)(\sigma+1)}{2 \alpha(\sigma+2)}=: \theta>0, \quad q \leq b=\frac{n q}{n-\alpha q},\quad q<\frac{2 n}{n-\alpha}.
\end{equation*}
The latter fact gives the Sobolev embedding $W^{\alpha, q} \hookrightarrow L^b$. Define the spaces:
\begin{equation*}
X:=\left\{C\left(I, H^\alpha\right) \cap L^p\left(I, W^{\alpha, q}\right):\|u\|_{L^{\infty}\left(I, H^\alpha\right)}+\|u\|_{L^p\left(I, W^{\alpha, q}\right)} \leq M\right\},
\end{equation*}
where $I=[0, \zeta]$ and $M, \zeta>0$ to be chosen later. Then the proof is based on a fixed point argument. Let us briefly discuss the proof, and more details are similar to the Theorem 4.1 in \cite{AD03}.  By Duhamel's formula, it suffices to prove that the functional
\begin{equation*}
\begin{aligned}
  \mathcal{T}(u)(t):= & S(t) u_0-i \lambda \int_0^t S(t-s)|u(s)|^{2 \sigma} u(s)ds \\
& +i \int_0^t S(t-s)(u(s) d W(s))-\frac{1}{2} \int_0^t S(t-s)\left(u(s) F_\Phi\right) d s
\end{aligned}
\end{equation*}
is a contraction in $L^{\rho}\left(\Omega ; X\right)$.

Let $u_1, u_2 \in L^{\rho}\left(\Omega ; X\right)$, then using Strichartz estimates \eqref{Strichartz}, we have almost surely
\begin{equation*}
\begin{aligned}
\left|\mathcal{T} u_1-\mathcal{T}u_2\right|_{X} \leq
&C\left|\left(|u_1|^{2 \sigma} u_1-|u_2|^{2 \sigma} u_2\right)\right|_{L^{p^{\prime}}\left(I, W^{\alpha, q^{\prime}}\right)}+\left|\int_0^t S(t-s)\left(u_1(s)-u_2(s)\right) d W(s)\right|_{X}+C\left|\left(u_1-u_2\right) F_\Phi\right|_{L^{p^{\prime}}\left(I, W^{\alpha, q^{\prime}}\right)} \\
:= & H_1+H_2+H_3.
\end{aligned}
\end{equation*}
The fractional chain rule given in Lemma \ref{lemm1} and H\"older's inequality give
\begin{equation*}
\begin{aligned}
\left\||u|^{2\sigma} u\right\|_{L^{p^{\prime}}\left(I, W^{\alpha, q^{\prime}}\right)} & \leq C\|u\|_{L^a\left(I, L^b\right)}^{2\sigma}\|u\|_{L^p\left(I, W^{\alpha, q}\right)}
  \leq C|I|^\theta\|u\|_{L^p\left(I, L^b\right)}^{2\sigma}\|u\|_{L^p\left(I, W^{\alpha, q}\right)}
& \leq C|I|^\theta\|u\|_{L^p\left(I, W^{\alpha, q}\right)}^{{2\sigma}+1}.
\end{aligned}
\end{equation*}
Similarly,
\begin{equation*}
\begin{aligned}
H_1=\left\||u_1|^{2\sigma}-|u_2|^{2\sigma} v\right\|_{L^{p^{\prime}}\left(I, W^{\alpha, q^{\prime}}\right)} & \leq C\left(\|u_1\|_{L^a\left(I, L^b\right)}^{2\sigma}+\|u_2\|_{L^a\left(I, L^b\right)}^{2\sigma}\right)\|u_1-u_2\|_{L^p\left(I, W^{\alpha, q}\right)} \\
& \leq C|I|^\theta\left(\|u_1\|_{L^p\left(I, W^{\alpha, q}\right)}^{2\sigma}+\|u_2\|_{L^p\left(I, W^{\alpha, q}\right)}^{2\sigma}\right)\|u_1-u_2\|_{L^p\left(I, W^{\alpha, q}\right)}.
\end{aligned}
\end{equation*}
The last term is easily estimated thanks to H\"older's inequality, i.e.,
\begin{equation*}
\begin{aligned}
H_3& \leq C \|I\|^{1-\frac{2}{p}}\left|u_1-u_2\right|_{L^p\left(I, W^{\alpha, q}\right)}\left|F_\Phi\right|_{L^{\frac{n p}{4\alpha}}}
 \leq C \|I\|^{1-\frac{2}{p}}\|\Phi\|_{L_{H S}(L^2,  H^{\alpha})}\left|u_1-u_2\right|_X.
\end{aligned}
\end{equation*}

Thus, using Lemma \ref{lemm2} easily shows that $\mathcal{T}$ is a contraction mapping in $L^{\rho}\left(\Omega, X\right)$ provided $\zeta$ is chosen sufficiently small, depending on $M$.
\end{proof}

\begin{remark}
For $d=1, 0<\sigma<\infty$, the local well-posedness of the stochastic fractional nonlinear Schr\"odinger equation \eqref{orie07} also holds, due to $H^{\alpha}({\mathbb{R}})\hookrightarrow L^{\infty}(\mathbb{R})$.
\end{remark}

The fractional nonlinearity Schr\"odinger shares  the similarity with the classical nonlinear Schr\"odinger equation, which has the formal law for the mass and energy by
\begin{equation*}
M[u]=\int_{\mathbb{R}^n} |u|^{2}dx,
\end{equation*}
and
\begin{equation}\label{Energy}
H[u]=\frac{1}{2}\int_{\mathbb{R}^n} \left|(-\Delta)^{\frac{\alpha}{2}}u\right|^2dx +\frac{\lambda}{2\sigma +2}\int_{\mathbb{R}^n} |u|^{2\sigma+2}dx.
\end{equation}
In the following, we will give the results about the mass $M[u]$ and the energy $H[u]$, which comes from \cite[Proposition 3]{ZZD}.
\begin{proposition}
\label{energy}
Assume that $u_0 \in H^{\alpha}$ radial, Assumption \ref{assump1} holds, and
\begin{equation*}
\sum_{k=1}^{\infty}\int_{\mathbb{R}^n}\frac{\left(\Phi e_k(x)-\Phi e_k(y)\right)^2}{|x-y|^{n+2\alpha}}dx \leq \mathcal{R}
 \end{equation*}
 with a positive constant $\mathcal{R}$. For any stopping time $\tau<\tau^*(u_0)$, we have
\begin{equation}
M(u(\tau))=M(u_0),\quad a.s.,
\end{equation}
and
\begin{equation}\label{0energy1}
\begin{aligned}
H(u(\tau))&=H(u_0)+\mathbf{Im} \int^{\tau}_0 \int_{\mathbb{R}^n} (-\Delta)^{\alpha}\bar{u}udxdW
-\frac{1}{2}\mathbf{Re}\int^{\tau}_0 \int_{\mathbb{R}^n}\left[(-\Delta)^{\alpha}\bar{u}\right]uF_{\Phi}dxdt\\
&\quad+\frac{1}{2}\sum_{k=1}^{\infty} \int^{\tau}_0 \int_{\mathbb{R}^n} (-\Delta)^{\alpha}\left(\bar{u}\Phi e_k(x)\right){u}\Phi e_k(x)dxdt.
\end{aligned}
\end{equation}
\end{proposition}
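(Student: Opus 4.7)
I would prove this by applying the infinite-dimensional It\^o formula to the functionals $M[u]$ and $H[u]$ along the solution of \eqref{orie07} rewritten in its equivalent It\^o form
\[
du = -i\bigl[(-\Delta)^{\alpha}u + \lambda|u|^{2\sigma}u\bigr]dt - iu\,dW - \tfrac{1}{2}u F_{\Phi}\,dt.
\]
Since $M$ and $H$ are not $C^{2}$ on $H^{\alpha}$, the formal calculations sketched below should be justified by a standard regularization argument: mollify $u$ with a Friedrichs mollifier (or spectrally truncate the noise to its first $N$ modes), apply the classical It\^o formula to the smoothed equation, and pass to the limit using the a priori regularity $u\in C([0,\tau];H^{\alpha})\cap L^{p}(0,\tau;W^{\alpha,q})$ from Theorem~\ref{theorem1}, the hypothesis $\sum_{k}\int\tfrac{(\Phi e_{k}(x)-\Phi e_{k}(y))^{2}}{|x-y|^{n+2\alpha}}dx\leq\mathcal{R}$, and Lemma~\ref{lemm2}. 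All calculations are performed up to the stopping time $\tau$ so that every quantity stays bounded.

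For the mass, set $v:=(-\Delta)^{\alpha}u+\lambda|u|^{2\sigma}u$. It\^o's formula gives $dM[u]=2\mathbf{Re}\!\int\!\bar u\,du\,dx+\sum_{k}\!\int|u\Phi e_{k}|^{2}dx\,dt$. The Hamiltonian drift contributes $-2\,\mathbf{Im}\!\int\!\bar u\,v\,dx\,dt$, which vanishes because $\int(-\Delta)^{\alpha}u\cdot\bar u\,dx=\|(-\Delta)^{\alpha/2}u\|_{L^{2}}^{2}$ and $\lambda\!\int|u|^{2\sigma+2}dx$ are both real. The stochastic integral $2\,\mathbf{Re}\!\int\!\bar u(-iu)\,dW\,dx$ vanishes term by term since $|u|^{2}\Phi e_{k}$ is real. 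Finally, the Stratonovich drift correction $-\!\int\!|u|^{2}F_{\Phi}\,dt$ cancels the quadratic-variation contribution $+\!\int\!|u|^{2}F_{\Phi}\,dt$, yielding $M(u(\tau))=M(u_{0})$ almost surely.

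For the energy, the first variation is $DH[u](\delta u)=\mathbf{Re}\!\int\!\bar v\,\delta u\,dx$. Substituting $du$ into It\^o's formula, the deterministic Hamiltonian piece $\mathbf{Re}\!\int\!\bar v(-iv)\,dx=-\mathbf{Im}\!\int|v|^{2}dx$ vanishes. The martingale piece yields $\mathbf{Im}\!\int\!\bar v\,u\,dx\,dW$, and since $\lambda\!\int|u|^{2\sigma+2}dx$ is real this collapses to $\mathbf{Im}\!\int(-\Delta)^{\alpha}\bar u\cdot u\,dx\,dW$. The Stratonovich drift correction $-\tfrac12 uF_{\Phi}$ produces $-\tfrac12\mathbf{Re}\!\int(-\Delta)^{\alpha}\bar u\cdot u\,F_{\Phi}\,dx\,dt-\tfrac{\lambda}{2}\!\int|u|^{2\sigma+2}F_{\Phi}\,dx\,dt$. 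The second-order It\^o correction $\tfrac12 D^{2}H[u](-iu\,dW,-iu\,dW)$ splits into a kinetic contribution $\tfrac12\sum_{k}\!\int|(-\Delta)^{\alpha/2}(u\Phi e_{k})|^{2}dx\,dt$, which by self-adjointness equals $\tfrac12\sum_{k}\!\int(-\Delta)^{\alpha}(\bar u\Phi e_{k})u\Phi e_{k}\,dx\,dt$, and a potential contribution $\tfrac{\lambda}{2}\!\int|u|^{2\sigma+2}F_{\Phi}\,dx\,dt$ (the mixed term vanishes since $\mathbf{Re}(\bar u\cdot(-iu\Phi e_{k}))=0$). The two $|u|^{2\sigma+2}F_{\Phi}$ contributions cancel and what remains is exactly \eqref{0energy1}.

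The main obstacle is the rigorous justification of this It\^o calculation in the fractional setting. The functional $H[\cdot]$ fails to be $C^{2}$ on $H^{\alpha}$ (the potential term sits only just inside $L^{1}$ via Sobolev embedding), the quantity $(-\Delta)^{\alpha}(\bar u\Phi e_{k})$ requires a fractional product-rule analysis, and the correction series $\sum_{k}\!\int(-\Delta)^{\alpha}(\bar u\Phi e_{k})u\Phi e_{k}\,dx$ must be shown convergent; it is exactly for the latter that the $\mathcal{R}$-bound on the noise is imposed. After the regularization and truncation step all these issues become standard, and the identities are stable under the limit. Since the statement is recorded as Proposition~3 of \cite{ZZD}, the proof may be completed by invoking that reference once the structural computation above has been set out.
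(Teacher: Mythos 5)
The paper offers no proof of this proposition at all---it is imported verbatim as Proposition~3 of \cite{ZZD}---so there is no internal argument to compare against; your It\^o-formula-plus-regularization sketch is the standard de Bouard--Debussche route that the cited reference follows, and your formal computation is correct: the Hamiltonian drift terms vanish, the potential contribution $\tfrac{\lambda}{2}\int|u|^{2\sigma+2}F_{\Phi}\,dx$ of the second-order correction cancels the corresponding piece of the Stratonovich drift correction, and what survives is exactly \eqref{0energy1}. The only blemishes are immaterial sign slips (e.g.\ $2\,\mathbf{Re}\int\bar u(-iv)\,dx=+2\,\mathbf{Im}\int\bar u v\,dx$, not $-2\,\mathbf{Im}$), which do not matter since those quantities vanish identically.
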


\begin{theorem}(Global well-posedness).
\label{theorem2}
 Assume that $u_0 \in H^{\alpha}$ radial, Assumptions \ref{assump1} and \ref{assump2} hold, and
 \begin{equation}\label{Phicond}
\sum_{k=1}^{\infty}\int_{\mathbb{R}^n}\frac{\left(\Phi e_k(x)-\Phi e_k(y)\right)^2}{|x-y|^{n+2\alpha}}dx \leq \mathcal{R}
 \end{equation}
 with a positive constant $\mathcal{R}$. Then there exists a unique global solution of equation \eqref{orie07} in $H^{\alpha}$, i.e., $\tau^{\star}(u_0)=\infty$.
\end{theorem}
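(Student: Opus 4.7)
By the blow-up alternative in Theorem \ref{theorem1}, it is enough to prove that $\mathbb{P}$-almost surely, for every $T>0$, $\sup_{t \in [0,\tau^*(u_0)\wedge T)}\|u(t)\|_{H^\alpha} < \infty$. I would introduce the localizing stopping times $\tau_R := \inf\{t<\tau^*(u_0): \|u(t)\|_{H^\alpha}\geq R\}\wedge T$, obtain an a priori bound on $\mathbb{E}\sup_{t\leq \tau_R}\|u(t)\|_{H^\alpha}^2$ uniform in $R$, and let $R\to\infty$ so that $\tau_R\uparrow \tau^*(u_0)\wedge T = T$ a.s., which forces $\tau^*(u_0)\geq T$ for every $T$.

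\textbf{Controlling the $H^\alpha$ norm by the energy.} Mass conservation in Proposition \ref{energy} gives $\|u(t)\|_{L^2}=\|u_0\|_{L^2}$, so the only remaining piece is $\|(-\Delta)^{\alpha/2}u\|_{L^2}$. In the defocusing case ($\lambda=1$), both terms of the Hamiltonian \eqref{Energy} are nonnegative, whence $\|(-\Delta)^{\alpha/2}u\|^2\leq 2H[u]$. In the focusing case ($\lambda=-1$), the fractional Gagliardo--Nirenberg inequality
\begin{equation*}
\|u\|_{L^{2\sigma+2}}^{2\sigma+2}\leq C\|u\|_{L^2}^{2\sigma+2-n\sigma/\alpha}\|(-\Delta)^{\alpha/2}u\|_{L^2}^{n\sigma/\alpha},
\end{equation*}
together with the mass-subcritical hypothesis $\sigma<2\alpha/n$ from Assumption \ref{assump2} (so $n\sigma/\alpha<2$), lets the negative term in $H[u]$ be absorbed via Young's inequality into a quarter of the kinetic part, giving $\|(-\Delta)^{\alpha/2}u\|^2\leq 4H[u]+C(\|u_0\|_{L^2})$. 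In both cases controlling $H[u(t)]$ suffices.

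\textbf{Estimating the energy evolution.} I would then bound each of the three terms on the right of \eqref{0energy1}. After a fractional integration by parts, the drift $\mathbf{Re}\int(-\Delta)^\alpha\bar u\cdot uF_\Phi\,dx$ is controlled by $\|F_\Phi\|_{L^\infty}\|(-\Delta)^{\alpha/2}u\|_{L^2}^2$, which Assumption \ref{assump1} (and a Sobolev embedding where needed) bounds by $C\|\Phi\|_{L_{HS}(L^2,H^\alpha)}^2\|u\|_{H^\alpha}^2$. For the It\^o correction $\sum_k\int(-\Delta)^\alpha(\bar u\Phi e_k)u\Phi e_k\,dx$, one uses the Gagliardo kernel representation of $(-\Delta)^\alpha$ to split the double integral into a piece involving differences of $\Phi e_k$, dominated by $\mathcal{R}\|u\|_{L^2}^2$ thanks to \eqref{Phicond}, and a piece involving differences of $u$, dominated by $C\|\Phi\|_{L_{HS}(L^2,H^\alpha)}^2\|u\|_{\dot H^\alpha}^2$. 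The martingale term is handled by Burkholder--Davis--Gundy: its quadratic variation is bounded, by Cauchy--Schwarz and Assumption \ref{assump1}, by $C\|\Phi\|_{L_{HS}(L^2,H^\alpha)}^2\int_0^\cdot \|u\|_{H^\alpha}^4\,ds$, and Young's inequality separates this into a term absorbable into the supremum on the left and a time-integral fit for Gronwall.

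\textbf{Closure and main obstacle.} Combining the above produces
\begin{equation*}
\mathbb{E}\sup_{t\leq \tau_R}\|u(t)\|_{H^\alpha}^2 \leq C(1+\|u_0\|_{H^\alpha}^2)+C\int_0^T\mathbb{E}\sup_{s\leq \tau_R\wedge r}\|u(s)\|_{H^\alpha}^2\,dr,
\end{equation*}
and Gronwall's lemma delivers a bound independent of $R$. Letting $R\to\infty$ and invoking monotone convergence rules out the $\limsup$ scenario of Theorem \ref{theorem1}, so $\tau^*(u_0)\geq T$ a.s.\ for every $T$, giving $\tau^*(u_0)=\infty$ a.s. The main obstacle is the It\^o correction, the last term of \eqref{0energy1}, where $(-\Delta)^\alpha$ acts on a product without a clean Leibniz rule; the Gagliardo representation and hypothesis \eqref{Phicond} are precisely what prevent an unbounded commutator and allow the Gronwall loop to close.
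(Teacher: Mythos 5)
Your skeleton is the same as the paper's: mass conservation plus an a priori bound on the energy, the defocusing/focusing dichotomy handled exactly as you describe (nonnegativity of $H$ versus fractional Gagliardo--Nirenberg with $\sigma<2\alpha/n$ and Young), Burkholder--Davis--Gundy for the martingale part, and a Gronwall closure feeding the blow-up alternative of Theorem \ref{theorem1}. The place where your plan breaks, however, is precisely the step you flag as the ``main obstacle'': you propose to estimate the drift term $\mathbf{Re}\int(-\Delta)^{\alpha}\bar u\,uF_{\Phi}\,dx$ and the It\^o correction $\sum_k\int(-\Delta)^{\alpha}(\bar u\Phi e_k)\,u\Phi e_k\,dx$ \emph{separately}, each by $C\|\Phi\|_{L_{HS}(L^2,H^{\alpha})}^2\|u\|_{H^{\alpha}}^2$. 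Both of those individual bounds require $\|F_{\Phi}\|_{L^{\infty}}=\sup_x\sum_k|\Phi e_k(x)|^2<\infty$ (and, for the drift, a fractional Leibniz rule producing an additional commutator term in $(-\Delta)^{\alpha/2}(uF_{\Phi})$ that is not controlled by $\|F_{\Phi}\|_{L^{\infty}}$ alone). Assumption \ref{assump1} only gives $\Phi e_k\in H^{\alpha}$ with $\alpha<1$, and $H^{\alpha}(\mathbb{R}^n)\not\hookrightarrow L^{\infty}(\mathbb{R}^n)$ for $n\geq 2$, so no Sobolev embedding rescues the $L^\infty$ bound; the hypotheses of the theorem simply do not support estimating these two terms one at a time.

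The paper's proof avoids this entirely by treating the two terms as a single quantity $\mathcal{J}_2$ and exhibiting an exact cancellation in the Gagliardo kernel representation: the pieces weighted by $F_{\Phi}(x)|u(x)-u(y)|^2$ (which are the ones you cannot control) cancel between the It\^o correction and the drift, leaving only
\begin{equation*}
\frac{C(n,\alpha)}{2}\sum_{k=1}^{\infty}\mathbf{Re}\int_{\mathbb{R}^n}\int_{\mathbb{R}^n}\frac{u(x)\bar u(y)\left(\Phi e_k(x)-\Phi e_k(y)\right)^2}{|x-y|^{n+2\alpha}}\,dx\,dy\leq \frac{C(n,\alpha)}{2}M(u_0)\mathcal{R},
\end{equation*}
a \emph{constant} by \eqref{Phicond} and mass conservation --- not merely a term linear in $\|u\|_{H^{\alpha}}^2$. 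A parallel observation handles the martingale term: in $\mathbf{Im}\int(-\Delta)^{\alpha/2}\bar u\,(-\Delta)^{\alpha/2}(u\Phi e_k)$ the piece weighted by $\Phi e_k(x)|u(x)-u(y)|^2$ is real and drops out of the imaginary part, so only the $(\Phi e_k(x)-\Phi e_k(y))$-difference piece survives, and its quadratic variation is controlled by $M(u_0)\mathcal{R}\|u\|_{\dot H^{\alpha}}^2$ via \eqref{Phicond} --- whereas your Cauchy--Schwarz bound of the quadratic variation by $C\|\Phi\|_{L_{HS}(L^2,H^\alpha)}^2\|u\|_{H^{\alpha}}^4$ again implicitly needs an $L^{\infty}$ bound on $\Phi e_k$. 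You correctly sensed that \eqref{Phicond} and the kernel representation are the point of the argument, but without identifying these two cancellations your Gronwall loop cannot be closed under the stated assumptions.
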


\begin{proof}
We only need to get the uniform boundedness of $\|u\|_{H^{\alpha}}$ to ensure the global existence of the solution. So we first obtain the uniform boundedness of the energy $H[u]$. Then the energy evolution of $u$ implies that for any $T_0>0$, any stopping time $\tau<\inf(T_0, \tau^*(u_0))$ and any time $t\leq \tau$,
\begin{equation*}
\begin{aligned}
\mathbb{E}\left[\sup_{t\leq \tau}H(u(t))\right]&\leq H(u_0)+\mathbb{E}\left[\sup_{t\leq \tau} \left|\int^{t}_{0} \int_{\mathbb{R}^n}(-\Delta)^{\alpha}\bar{u}udxdW\right|\right]\\
&\quad+\frac{1}{2}\mathbb{E}\left[\sup_{t\leq \tau} \left|\sum_{k=1}^{\infty} \int^{t}_0 \int_{\mathbb{R}^n} (-\Delta)^{\alpha}\left(\bar{u}\Phi e_k(x)\right){u}\Phi e_k(x)dxds-\mathbf{Re}\int^{t}_0 \int_{\mathbb{R}^n}\left[(-\Delta)^{\alpha}\bar{u}\right]uF_{\Phi}dxds\right|\right]\\
&:=H(u_0)+\mathcal{J}_1+\mathcal{J}_2.
\end{aligned}
\end{equation*}

For the term $\mathcal{J}_1$, one has
\begin{equation}
  \begin{aligned}
    \mathbf{Im}\left[\int^{t}_{0} \int_{\mathbb{R}^n}(-\Delta)^{\alpha}\bar{u}udxdW\right]&=\mathbf{Im}\left[\sum_{{k=1}}^{\infty} \int^{t}_{0} \int_{\mathbb{R}^n}(-\Delta)^{\frac{\alpha}{2}}\bar{u}\cdot(-\Delta)^{\frac{\alpha}{2}}(u\Phi e_k)dxd\beta_k\right]\\
  & =\frac{C(n, \alpha)}{2}\sum_{{k=1}}^{\infty}\mathbf{Im}\left[\int^{t}_{0}\int_{\mathbb{R}^n} \int_{\mathbb{R}^n} \frac{\left(\bar{u}(x)-\bar{u}(y)\right)\left(u(x)\Phi e_k(x)-u(y)\Phi e_k(y)\right)}{|x-y|^{n+2 \alpha}} dydxd\beta_k\right] \\
  &=\frac{C(n, \alpha)}{2}\sum_{{k=1}}^{\infty}\mathbf{Im}\left[\int^{t}_{0}\int_{\mathbb{R}^n} \int_{\mathbb{R}^n} \frac{\Phi e_k(x)\left|u(x)-u(y)\right|^2}{|x-y|^{n+2 \alpha}} dydxd\beta_k\right] \\
  &\quad+\frac{C(n, \alpha)}{2}\sum_{{k=1}}^{\infty}\mathbf{Im}\left[\int^{t}_{0}\int_{\mathbb{R}^n} \int_{\mathbb{R}^n} \frac{\left(\bar{u}(x)-\bar{u}(y)\right)\left(\Phi e_k(x)-\Phi e_k(y)\right)u(y)}{|x-y|^{n+2 \alpha}} dydxd\beta_k\right] \\
  &=\frac{C(n, \alpha)}{2}\sum_{{k=1}}^{\infty}\mathbf{Im}\left[\int^{t}_{0}\int_{\mathbb{R}^n} \int_{\mathbb{R}^n} \frac{\left(\bar{u}(x)-\bar{u}(y)\right)\left(\Phi e_k(x)-\Phi e_k(y)\right)u(y)}{|x-y|^{n+2 \alpha}} dydxd\beta_k\right].\\
  \end{aligned}
  \end{equation}
By using Burkholder-Davis-Gundy inequality, H\"older equality, Young inequality, and  Lemma 3.4 (see \cite{ZD16}), we have
\begin{equation}\label{J1}
  \begin{aligned}
  \mathcal{J}_1&=\mathbb{E}\left[\sup_{t\leq \tau} \left|\int^{t}_{0} \int_{\mathbb{R}^n}(-\Delta)^{\alpha}\bar{u}udxdW\right|\right]\\
  &\leq C\mathbb{E}\left[\left(\int^{\tau}_{0}\left(\sum_{{k=1}}^{\infty}\int_{\mathbb{R}^n} \int_{\mathbb{R}^n} \frac{\left(\bar{u}(x)-\bar{u}(y)\right)\left(\Phi e_k(x)-\Phi e_k(y)\right)u(y)}{|x-y|^{n+2 \alpha}} dxdy\right)^2\mathrm{d}t\right)^\frac{1}{2}\right] \\
  &\leq C\mathbb{E}\left[\left(\int^{\tau}_{0}\left(\sum_{{k=1}}^{\infty}\int_{\mathbb{R}^n} \left(\int_{\mathbb{R}^n} \frac{|u(y)|\left|u(x)-u(y)\right|\left|\Phi e_k(x)-\Phi e_k(y)\right|}{|x-y|^{n+2 \alpha}} dx\right) dy\right)^2dt\right)^\frac{1}{2}\right] \\
  &\leq C\mathbb{E}\left[\left(\int^{\tau}_{0}\left(\sum_{{k=1}}^{\infty}\|u\|^2\int_{\mathbb{R}^n} \left(\int_{\mathbb{R}^n} \frac{\left|\left(u(x)-u(y)\right)\left(\Phi e_k(x)-\Phi e_k(y)\right)\right|}{|x-y|^{n+2 \alpha}} dx\right)^2 dy\right)dt\right)^\frac{1}{2}\right] \\
  &\leq C(M(u_0))\mathbb{E}\left[\left(\int^{\tau}_{0}\left(\sum_{{k=1}}^{\infty}\int_{\mathbb{R}^n} \left(\int_{\mathbb{R}^n} \frac{\left|u(x)-u(y)\right|^2}{|x-y|^{n+2 \alpha}} dx \int_{\mathbb{R}^n} \frac{\left|\Phi e_k(x)-\Phi e_k(y)\right|^2}{|x-y|^{n+2 \alpha}} dx\right) dy\right)dt\right)^\frac{1}{2}\right]\\
  &\leq C(n,\alpha,T_0,M(u_0), \mathcal{R})\mathbb{E}\left[\left(\int^{\tau}_{0}\|(-\Delta)^\frac{\alpha}{2}u\|^2dt\right)^\frac{1}{2}\right]\\
  &\leq C(n,\alpha,T_0,M(u_0), \mathcal{R})+C(n,\alpha,T_0,M(u_0), \mathcal{R})\mathbb{E}\left[\left(\int^{\tau}_{0}\|(-\Delta)^\frac{\alpha}{2}u\|^4dt\right)^\frac{1}{2}\right]\\
  &\leq C(n,\alpha,T_0,M(u_0), \mathcal{R})+C(n,\alpha,T_0,M(u_0), \mathcal{R})\varepsilon\mathbb{E}\left[\sup_{t\leq \tau}\|(-\Delta)^\frac{\alpha}{2}u\|^2\right]\\
  &\quad+C(n,\alpha,T_0,M(u_0), \mathcal{R}, \varepsilon)\int^{\tau}_{0}\mathbb{E}\left(\sup_{r\leq s} \|(-\Delta)^\frac{\alpha}{2}u(r)\|^2\right)ds,
  \end{aligned}
  \end{equation}
where $\varepsilon$ is a positive number.

For the term $\mathcal{J}_2$, we have
  \begin{equation}
  \begin{aligned}
&\sum_{k=1}^{\infty} \int_{\mathbb{R}^n} (-\Delta)^{\alpha}\left(\bar{u}\Phi e_k(x)\right){u}\Phi e_k(x)dxdt-\mathbf{Re} \int_{\mathbb{R}^n}\left[(-\Delta)^{\alpha}\bar{u}\right]uF_{\Phi}dxdt\\
 &=\sum_{k=1}^{\infty}\mathbf{Re}\left\{\left((-\Delta)^{\frac{\alpha}{2}}\left(\bar{u}\Phi e_k\right), (-\Delta)^{\frac{\alpha}{2}}\left(\bar{u}\Phi e_k\right)\right)-\left((-\Delta)^{\frac{\alpha}{2}}\bar{u}, (-\Delta)^{\frac{\alpha}{2}}\left(\bar{u}(\Phi e_k)^2\right)\right)
 \right\}\\
 &=\frac{C(n, \alpha)}{2}\sum_{k=1}^{\infty}\int_{\mathbb{R}^n}\int_{\mathbb{R}^n}\frac{\left(\bar{u}(x)\Phi e_k(x)-\bar{u}(y)\Phi e_k(y)\right)\left(u(x)\Phi e_k(x)-u(y)\Phi e_k(y)\right)}{|x-y|^{n+2\alpha}}dxdy\\
&\quad-\frac{C(n, \alpha)}{2}\sum_{k=1}^{\infty}\mathbf{Re}\int_{\mathbb{R}^n}\int_{\mathbb{R}^n}\frac{\left(\bar{u}(x)-\bar{u}(y)\right)\left(u(x)(\Phi e_k)^2(x)-u(y)(\Phi e_k)^2(y)\right)}{|x-y|^{n+2\alpha}}dxdy\\
 &=\frac{C(n, \alpha)}{2}\sum_{k=1}^{\infty}\mathbf{Re}\int_{\mathbb{R}^n}\int_{\mathbb{R}^n}
 \frac{u(x)\bar{u}(y)\Phi e_k(x)\left(\Phi e_k(x)-\Phi e_k(y)\right)-\bar{u}(x)u(y)\Phi e_k(y)\left(\Phi e_k(x)-\Phi e_k(y)\right)}{|x-y|^{n+2\alpha}}dxdy\\
 &=\frac{C(n, \alpha)}{2}\sum_{k=1}^{\infty}\mathbf{Re}\int_{\mathbb{R}^n}\int_{\mathbb{R}^n}\frac{u(x)\bar{u}(y)\left(\Phi e_k(x)-\Phi e_k(y)\right)\left(\Phi e_k(x)-\Phi e_k(y)\right)}{|x-y|^{n+2\alpha}}dxdy\\
 &\leq\frac{C(n, \alpha)}{2}\sum_{k=1}^{\infty}\int_{\mathbb{R}^n}\int_{\mathbb{R}^n}\frac{|u(x)\left(\Phi e_k(x)-\Phi e_k(y)\right)|\cdot |u(y)\left(\Phi e_k(x)-\Phi e_k(y)\right)|}{|x-y|^{n+2\alpha}}dxdy\\
 &\leq\frac{C(n, \alpha)}{4}\sum_{k=1}^{\infty}\int_{\mathbb{R}^n}\int_{\mathbb{R}^n}\frac{|u(x)|^2\left(\Phi e_k(x)-\Phi e_k(y)\right)^2}{|x-y|^{n+2\alpha}}dxdy+\frac{C(n, \alpha)}{4}\sum_{k=0}^{\infty}\int_{\mathbb{R}^n}\int_{\mathbb{R}^n}\frac{|u(y)|^2\left(\Phi e_k(x)-\Phi e_k(y)\right)^2}{|x-y|^{n+2\alpha}}dxdy\\
 &=\frac{C(n, \alpha)}{4}\sum_{k=1}^{\infty}\left(\int_{\mathbb{R}^n}|u(x)|^2\int_{\mathbb{R}^n}\frac{\left(\Phi e_k(x)-\Phi e_k(y)\right)^2}{|x-y|^{n+2\alpha}}dy\right)dx+\frac{C(n, \alpha)}{4}
 \sum_{k=1}^{\infty}\left(\int_{\mathbb{R}^n}|u(y)|^2\int_{\mathbb{R}^n}\frac{\left(\Phi e_k(x)-\Phi e_k(y)\right)^2}{|x-y|^{n+2\alpha}}dx\right)dy\\
 &\leq \frac{C(n, \alpha)}{2}M(u_0)\mathcal{R}.
 \end{aligned}
 \end{equation}
 Thus we have
 \begin{equation}
 \label{J2}
 \mathcal{J}_2\leq C(n,\alpha,T_0,M(u_0),\mathcal{R}).
 \end{equation}
Thus combined the above inequalities \eqref{J1} and \eqref{J2}, we have
\begin{equation}
\label{1energy1}
  \begin{aligned}
    \mathbb{E}\left[\sup_{t\leq \tau}H(u(t))\right]&\leq H(u_0)+ C(n,\alpha,T_0,M(u_0), \mathcal{R})+C(n,\alpha,T_0,M(u_0), \mathcal{R})\varepsilon\mathbb{E}\left[\sup_{t\leq \tau}\|(-\Delta)^\frac{\alpha}{2}u\|^2\right]\\
  &\quad+C(n,\alpha,T_0,M(u_0), \mathcal{R}, \varepsilon)\int^{\tau}_{0}\mathbb{E}\left(\sup_{r\leq s} \|(-\Delta)^\frac{\alpha}{2}u(r)\|^2\right)ds.
  \end{aligned}
\end{equation}

We first consider the case where $\lambda=1$ and $\sigma<\frac{2\alpha}{n-2\alpha}$, then using the equality \eqref{Energy}, we obtain
\begin{equation}
\begin{aligned}
\mathbb{E}\sup_{t\leq \tau}\|u\|^2_{\dot{H}^{\alpha}}&\leq 2\mathbb{E}\sup_{t\leq \tau} H(u)\\
&\leq 2H(u_0)+ C(n,\alpha,T_0,M(u_0), \mathcal{R})+C(n,\alpha,T_0,M(u_0), \mathcal{R})\varepsilon\mathbb{E}\left[\sup_{t\leq \tau}\|(-\Delta)^\frac{\alpha}{2}u\|^2\right]\\
  &\quad+C(n,\alpha,T_0,M(u_0), \mathcal{R}, \varepsilon)\int^{\tau}_{0}\mathbb{E}\left(\sup_{r\leq s} \|(-\Delta)^\frac{\alpha}{2}u(r)\|^2\right)ds.
\end{aligned}
\end{equation}
Then, using the Gr\"onwall inequality and the conservation of mass, one can deduce
\begin{equation*}
\mathbb{E}\sup_{t\leq \tau}\|u\|^2_{H^{\alpha}}\leq C(n,\alpha,T_0,M(u_0), H(u_0),\mathcal{R}).
\end{equation*}

Treating the case where $\lambda=-1$ and $\sigma<\frac{2\alpha}{n}$, we can utilize Gagliardo--Nirenberg's inequality (see \cite{VDD19}) and Young's inequality to derive the following equation:
\begin{equation}
\label{in6}
\begin{aligned}
\|u\|^{2\sigma+2}_{L^{2\sigma+2}}&\leq C \|u\|^{\frac{n\sigma}{\alpha}}_{\dot{H}^{\alpha}}\cdot \|u\|^{2\sigma+2-\frac{n\sigma}{\alpha}}_{L^2}
\leq C \varepsilon \|u\|^{2}_{\dot{H}^{\alpha}}+ C_{\varepsilon}\|u\|^{2+\frac{4\sigma}{2\alpha-n\sigma}}_{L^2}.
\end{aligned}
\end{equation}
It's important to note that in the last inequality, it is crucial that $\sigma<\frac{2\alpha}{n}$. Then by the equality \eqref{Energy} and inequality \eqref{in6}, we know
\begin{equation*}
\begin{aligned}
\mathbb{E}\sup_{t\leq \tau}\|u\|^2_{\dot{H}^{\alpha}}&=2\mathbb{E}[\sup_{t\leq \tau} H(u)]+\frac{1}{\sigma+1}\mathbb{E}\left[\sup_{t\leq \tau}\|u\|^{2\sigma+2}_{L^{2\sigma+2}}\right]\\
&\leq 2H(u_0)+ C(n,\alpha,T_0,M(u_0), \mathcal{R})+C(n,\alpha,T_0,M(u_0), \mathcal{R})\varepsilon\mathbb{E}\left[\sup_{t\leq \tau}\|(-\Delta)^\frac{\alpha}{2}u\|^2\right]\\
  &\quad+C(n,\alpha,T_0,M(u_0), \mathcal{R}, \varepsilon)\int^{\tau}_{0}\mathbb{E}\left(\sup_{r\leq s} \|(-\Delta)^\frac{\alpha}{2}u(r)\|^2\right)ds.
\end{aligned}
\end{equation*}
By using Gr\"onwall inequality, we obtain
\begin{equation*}
\mathbb{E}\sup_{t\leq \tau}\|u\|^2_{\dot{H}^{\alpha}} \leq C( n,\alpha,T_0, M(u_0), H(u_0),\mathcal{R}).
\end{equation*}
These a priori estimates, combined with local well-posedness, imply the global existence of a unique solution.
\end{proof}

\begin{remark}
The condition \eqref{Phicond} is technical and reasonable. Here we introduce a smooth function $\rho(x)$ defined for $0 \leq x<\infty$ such that $0 \leq \rho(x) \leq 1$, for $x\geq 0$ and
\begin{equation}
\label{pho}
\rho(x)=\begin{cases}0, & \text { if }~ 0 \leq x\leq \frac{1}{2}, \\ 1, & \text { if }~ x \geq 1.\end{cases}
\end{equation}
Taking $\Phi e_k(x)=\rho(\frac{|x|}{k})$ and $\alpha \in (\frac{1}{2}, 1)$, there exists a positive constant $L_1$ independent of $k$ and $y\in \mathbb{R}^n$  such that
\begin{equation*}
\sum_{k=1}^{\infty}\int_{\mathbb{R}^n} \frac{\left|\Phi e_k(x)-\Phi e_k(y)\right|^2}{|x-y|^{n+2\alpha}} d x \leq \sum_{k=1}^{\infty}\frac{L_1}{k^{2\alpha}}< \infty.
\end{equation*}
\end{remark}

\section{Stochastic symplecticity}
  As we know, the deterministic fractional nonlinear Schr\"odinger equation is an infinite-dimensional Hamiltonian system, which characterizes the geometric invariants of the phase flow. In the following, we analyze its stochastic multi-symplectic structure for stochastic fractional nonlinear Schr\"odinger equation \eqref{orie07}.

Assume that $u=p+iq$, then we can rewrite the SFNSE \eqref{orie07} as a  pairs of real-valued equations
\begin{equation}
\label{seperate}
\left\{
\begin{aligned}
dp-(-\Delta)^{\alpha}qdt-\lambda\left(p^2+q^2\right)^{\sigma}qdt-q\circ dW=0,\\
dq+(-\Delta)^{\alpha}pdt+\lambda  \left(p^2+q^2\right)^{\sigma}pdt+p\circ dW=0. \\
\end{aligned}
\right.
\end{equation}
Similar to the classical equation, the SFNSE also has the following results.
\begin{lemma}
The system \eqref{seperate} has the following mass and energy laws.
\begin{equation}
\label{decop}
\begin{aligned}
Mass: M&=\int_{\mathbb{R}^n}(p^2+q^2)dx,\\
Energy: H&=
\frac{1}{2}\int_{\mathbb{R}^n} \left|(-\Delta)^{\frac{\alpha}{2}}p\right|^2dx+\frac{1}{2}\int_{\mathbb{R}^n} \left|(-\Delta)^{\frac{\alpha}{2}}q\right|^2dx  +\frac{\lambda}{2\sigma +2}\int_{\mathbb{R}^n} (p^2+q^2)^{\sigma+1}dx.
\\
\end{aligned}
\end{equation}
\end{lemma}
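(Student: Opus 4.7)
The proof plan is to translate the complex mass and energy functionals $M[u]$ and $H[u]$ already analyzed in \eqref{Energy} and Proposition \ref{energy} into the real-valued formulation via the decomposition $u = p + iq$, and then to confirm that the resulting expressions are the correct conserved (for the mass) and Hamiltonian (for the energy) quantities of the coupled system \eqref{seperate}.

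The central identities I would exploit are those produced by the fact that $(-\Delta)^{\alpha/2}$ is a real-linear operator: for real-valued $p, q$, one has $(-\Delta)^{\alpha/2} u = (-\Delta)^{\alpha/2} p + i(-\Delta)^{\alpha/2} q$, hence $|(-\Delta)^{\alpha/2} u|^2 = |(-\Delta)^{\alpha/2} p|^2 + |(-\Delta)^{\alpha/2} q|^2$. Combined with the elementary identities $|u|^2 = p^2 + q^2$ and $|u|^{2\sigma+2} = (p^2 + q^2)^{\sigma+1}$, direct substitution into the functionals in \eqref{Energy} reproduces exactly the formulas in \eqref{decop}, so that the real-valued $M$ and $H$ coincide with their complex counterparts.

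For the conservation-of-mass content of the statement, I would verify it directly by applying Stratonovich calculus to $M = \int_{\mathbb{R}^n}(p^2+q^2)\,dx$ using \eqref{seperate}. The drift terms collapse to an expression proportional to $\int_{\mathbb{R}^n}[\,p(-\Delta)^{\alpha} q - q(-\Delta)^{\alpha} p\,]\,dx$, which vanishes by self-adjointness of $(-\Delta)^{\alpha}$, together with a nonlinear contribution containing $(p^2+q^2)^\sigma(pq - qp)\equiv 0$; the Stratonovich noise term $\int_{\mathbb{R}^n}(pq - qp)\circ dW\,dx$ vanishes for the same algebraic reason. Alternatively, mass conservation is inherited from Proposition \ref{energy} via the identity $M[u] = M(p,q)$. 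The energy $H$ is not conserved in the stochastic setting, but the corresponding evolution law is exactly the real-valued decomposition of \eqref{0energy1}, which one can obtain either by the same Stratonovich calculation or by substituting $\bar u = p - iq$ and $u = p + iq$ on both sides of \eqref{0energy1}.

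I do not anticipate a genuine obstacle: the entire argument is linear in the decomposition $u \mapsto (p,q)$, and since Proposition \ref{energy} already handles all the required analytic estimates (including the Stratonovich-to-It\^o corrections encoded in $F_\Phi$), the lemma reduces to a bookkeeping step. The mildly delicate point is ensuring the correct sign conventions when separating the real and imaginary parts of the multiplicative noise $u \circ dW$ and of the nonlocal term $(-\Delta)^{\alpha} u$ in \eqref{seperate}, but once this is consistent the identities \eqref{decop} fall out immediately.
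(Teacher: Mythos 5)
The paper states this lemma without proof, presenting it as an immediate consequence of the complex-variable mass and energy in \eqref{Energy} ("similar to the classical equation"), and your proposal is correct and supplies exactly that implicit bookkeeping: the decomposition $u=p+iq$ with $|(-\Delta)^{\alpha/2}u|^2=|(-\Delta)^{\alpha/2}p|^2+|(-\Delta)^{\alpha/2}q|^2$ and $|u|^{2\sigma+2}=(p^2+q^2)^{\sigma+1}$, together with the direct Stratonovich verification that the drift and noise contributions to $dM$ cancel by self-adjointness of $(-\Delta)^{\alpha}$ and the identity $pq-qp\equiv 0$. Your added observation that $H$ is not conserved but instead evolves according to the real-valued form of \eqref{0energy1} is consistent with Proposition \ref{energy}, so there is no gap.
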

The following lemma gives the nonlocal character of fractional Laplacian based on its definition.
\begin{lemma}
Let $u\in \mathcal{C}^{1,1}_{loc}\bigcap \mathcal{L}_{\alpha}$ be a function. Then we have
\begin{equation}
-(-\Delta)^{\alpha}u=\mathcal{G}^{2}u:=\mathcal{G}(\mathcal{G}u),
\end{equation}
where $\mathcal{G}u(x, t)=\frac{1}{(2\pi)^{n/2}}\int_{\mathbb{R}^{n}}\left(i|\xi|^{\alpha-1}\xi\right)\hat{u}(\xi,t)e^{ix \cdot \xi}d\xi$.
\end{lemma}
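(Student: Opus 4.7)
The plan is to prove the identity via a Fourier-multiplier computation, comparing the symbols of $-(-\Delta)^\alpha$ and $\mathcal{G}^2$. First I would invoke the standard equivalence between the principal-value definition of $(-\Delta)^\alpha$ recalled in Section~2 and its Fourier multiplier representation $\widehat{(-\Delta)^\alpha u}(\xi) = |\xi|^{2\alpha}\hat u(\xi)$, valid for Schwartz $u$. Directly from the definition in the statement, $\widehat{\mathcal{G}u}(\xi) = i|\xi|^{\alpha-1}\xi\,\hat u(\xi)$, so that $\mathcal{G}u$ is vector-valued. Reading $\mathcal{G}^{2} = \mathcal{G}\!\cdot\!\mathcal{G} = \sum_{j=1}^n \mathcal{G}_j\mathcal{G}_j$, where $\mathcal{G}_j$ is the scalar operator of symbol $i|\xi|^{\alpha-1}\xi_j$, a second application gives
\begin{equation*}
\widehat{\mathcal{G}^2 u}(\xi) \;=\; \sum_{j=1}^n \bigl(i|\xi|^{\alpha-1}\xi_j\bigr)^{2} \hat u(\xi) \;=\; -|\xi|^{2\alpha-2}\Bigl(\sum_{j=1}^n \xi_j^{2}\Bigr)\hat u(\xi) \;=\; -|\xi|^{2\alpha}\hat u(\xi),
\end{equation*}
which matches $-\widehat{(-\Delta)^\alpha u}(\xi)$. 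Fourier inversion then establishes $-(-\Delta)^\alpha u = \mathcal{G}^2 u$ on the Schwartz class.

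To reach the hypothesis class $\mathcal{C}^{1,1}_{loc}\cap \mathcal{L}_\alpha$, which is precisely the minimal regularity space in which the principal-value integral defining $(-\Delta)^\alpha u(x)$ converges at each point $x$, I would use a mollification argument. Setting $u_\varepsilon := u * \rho_\varepsilon$ for a smooth compactly supported bump $\rho_\varepsilon$, the Schwartz-case identity applies to $u_\varepsilon$. Letting $\varepsilon \to 0$, pointwise convergence $(-\Delta)^\alpha u_\varepsilon(x) \to (-\Delta)^\alpha u(x)$ at every $x$ follows by dominated convergence, using the local $C^{1,1}$ bound to tame the kernel singularity at $y=x$ and the $\mathcal{L}_\alpha$ tail integrability at infinity. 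For the right-hand side, the operator $\mathcal{G}^2$ has the same Riesz-type singular-integral kernel $-C(n,\alpha)|x-y|^{-(n+2\alpha)}$ as $-(-\Delta)^\alpha$ (obtained by inverse-Fourier-transforming the symbol $-|\xi|^{2\alpha}$), so the same dominated-convergence argument yields $\mathcal{G}^2 u_\varepsilon(x) \to \mathcal{G}^2 u(x)$, and the identity passes to the limit.

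The main obstacle is the rigor of this last identification: one has to verify that the formal Fourier definition of $\mathcal{G}^{2}$ on this borderline class indeed coincides with the singular-integral operator of the same symbol, and that mollification commutes with $\mathcal{G}^{2}$ in a suitable sense. The cleanest way around this is to upgrade the Fourier computation into a kernel identification, observing that the composition $\mathcal{G}_j\mathcal{G}_j$ for each $j$ is a Riesz-type convolution whose kernels sum to that of $-(-\Delta)^\alpha$. In that formulation the identity $\mathcal{G}^2 u(x) = -(-\Delta)^\alpha u(x)$ on $\mathcal{C}^{1,1}_{loc}\cap \mathcal{L}_\alpha$ becomes a direct pointwise statement not requiring any limit, and the Fourier computation above simply certifies that the two integral kernels agree.
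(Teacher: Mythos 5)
Your core argument -- the symbol factorization $-|\xi|^{2\alpha}=\bigl(i|\xi|^{\alpha-1}\xi\bigr)\cdot\bigl(i|\xi|^{\alpha-1}\xi\bigr)$ verified on the Fourier side -- is exactly the paper's proof, which consists of nothing more than this one-line multiplier computation. Your additional mollification and kernel-identification discussion for the borderline class $\mathcal{C}^{1,1}_{loc}\cap\mathcal{L}_{\alpha}$ goes beyond what the paper does (the paper does not address the function class at all), but it does not change the essential approach.
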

\begin{proof}
By the definition of the fractional Laplacian, we have
\begin{equation*}
\begin{aligned}
-(-\Delta)^{\alpha}u(x,t)&=\frac{1}{(2\pi)^n} \int_{\mathbb{R}^{n}} \left(-|\xi|^{2\alpha}\hat{u}(\xi, t)\right)e^{ix\cdot \xi}d\xi
=\frac{1}{(2\pi)^{n}}\int_{\mathbb{R}^{n}}(i|\xi|^{\alpha-1}\xi)(i|\xi|^{\alpha-1}\xi)\cdot \hat{u}(\xi,t)e^{ix\cdot \xi}d\xi\\
&=:\mathcal{G}\left(\mathcal{G}u\right).
\end{aligned}
\end{equation*}
\end{proof}

\subsection{Stochastic symplectic structure}
To examine the stochastic multi-symplectic structure of the equation \eqref{orie07}, we initially utilize the following decomposition derived from the definition, as presented in \cite[Lemma 2.3]{WH18}.
\begin{lemma}
Let $u$ be a period function. Then we have
\begin{equation}
-(-\Delta)^{\alpha}u=\tilde{\mathcal{G}}^2u:=\tilde{\mathcal{G}}(\tilde{\mathcal{G}}u),
\end{equation}
where $\tilde{\mathcal{G}}$ defined by $\tilde{\mathcal{G}}u:=\sum_{l\in \mathbb{Z}}(iv_l|v_l|^{\alpha-1})\hat{u}_le^{iv_l(x-a)}$ is a skew-adjoint operator.
\end{lemma}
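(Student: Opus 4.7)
The statement is the periodic analogue of the preceding whole-space decomposition, so my plan is to mimic that proof, replacing the Fourier integral with a Fourier series. Since $u$ is periodic, write
\begin{equation*}
u(x)=\sum_{l\in\mathbb{Z}}\hat{u}_{l}\,e^{i v_{l}(x-a)},
\end{equation*}
where $\{v_{l}\}_{l\in\mathbb{Z}}\subset\mathbb{R}$ are the Fourier frequencies of the lattice. For periodic functions the fractional Laplacian acts as a Fourier multiplier with symbol $|v_{l}|^{2\alpha}$, i.e.
\begin{equation*}
(-\Delta)^{\alpha}u=\sum_{l\in\mathbb{Z}}|v_{l}|^{2\alpha}\,\hat{u}_{l}\,e^{i v_{l}(x-a)}.
\end{equation*}
This is the fact I will treat as the input from the definition; everything else is algebraic.

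Next I would compute $\tilde{\mathcal{G}}u$ from its definition and then apply $\tilde{\mathcal{G}}$ a second time. Since $\tilde{\mathcal{G}}$ is a Fourier multiplier with symbol $iv_{l}|v_{l}|^{\alpha-1}$ (with the convention that the $l=0$ mode is killed, which is consistent with the expected vanishing of the symbol of a fractional gradient at the origin), composition multiplies the symbols:
\begin{equation*}
\bigl(iv_{l}|v_{l}|^{\alpha-1}\bigr)^{2}=i^{2}\,v_{l}^{2}\,|v_{l}|^{2\alpha-2}=-|v_{l}|^{2\alpha}.
\end{equation*}
Therefore
\begin{equation*}
\tilde{\mathcal{G}}(\tilde{\mathcal{G}}u)=\sum_{l\in\mathbb{Z}}\bigl(iv_{l}|v_{l}|^{\alpha-1}\bigr)^{2}\hat{u}_{l}\,e^{iv_{l}(x-a)}=-\sum_{l\in\mathbb{Z}}|v_{l}|^{2\alpha}\hat{u}_{l}\,e^{iv_{l}(x-a)}=-(-\Delta)^{\alpha}u,
\end{equation*}
which is exactly the first claim.

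For the skew-adjointness I will use Parseval's identity on the periodic interval. Because the symbol $m_{l}:=iv_{l}|v_{l}|^{\alpha-1}$ is purely imaginary (recall $v_{l}\in\mathbb{R}$), one has $\overline{m_{l}}=-m_{l}$, and for any sufficiently smooth periodic $u,v$,
\begin{equation*}
\bigl(\tilde{\mathcal{G}}u,v\bigr)=\mathrm{Re}\sum_{l\in\mathbb{Z}}m_{l}\,\hat{u}_{l}\,\overline{\hat{v}_{l}}=\mathrm{Re}\sum_{l\in\mathbb{Z}}\hat{u}_{l}\,\overline{(-m_{l}\hat{v}_{l})}=-\bigl(u,\tilde{\mathcal{G}}v\bigr),
\end{equation*}
so $\tilde{\mathcal{G}}^{\ast}=-\tilde{\mathcal{G}}$. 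I expect the only mild subtleties to be (i) making the $l=0$ convention explicit so that $m_{0}=0$ is well-defined (avoiding the formal $0\cdot 0^{\alpha-1}$), and (ii) justifying the termwise manipulations on an appropriate dense class (e.g.\ trigonometric polynomials or $H^{\alpha}$ periodic functions) so that Parseval applies; neither constitutes a genuine obstacle, and the whole argument is essentially a two-line symbol calculation plus a Parseval identity paralleling the previous lemma.
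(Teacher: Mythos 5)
Your proof is correct and follows essentially the same route as the paper: the paper does not prove this periodic lemma itself (it cites Lemma 2.3 of \cite{WH18}), but its proof of the preceding whole-space analogue is exactly the symbol-composition computation $(i|\xi|^{\alpha-1}\xi)\cdot(i|\xi|^{\alpha-1}\xi)=-|\xi|^{2\alpha}$ that you reproduce with Fourier series. Your Parseval argument for skew-adjointness (purely imaginary multiplier) is the standard one and is consistent with how the paper uses $\tilde{\mathcal{G}}$ later.
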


The lemma below demonstrates that the system \eqref{seperate} exhibits the standard property.
\begin{lemma}
The system \eqref{seperate} is a stochastic Hamiltonian system
\begin{equation}
\left\{
\begin{aligned}
dp&=-\nabla_{p}Hdt+q\circ dW(t), ~~p(t_0)=p_0, \\
dq&=\nabla_{q}Hdt-p\circ dW_k(t), ~~q(t_0)=q_0, \\
\end{aligned}
\right.
\end{equation}
with  the Hamiltonian function
\begin{equation}
H(t,p,q)=-\frac{1}{2}\left(p^{T}\mathcal{G}(\mathcal{G}p)+q^{T}\mathcal{G}(\mathcal{G}q)\right)+\frac{\lambda}{2(\sigma+1)}(p^2+q^2)^{\sigma+1}, ~~ \widetilde{H}(t,p,q)=-\frac{1}{2}(p^2+q^2).
\end{equation}
\end{lemma}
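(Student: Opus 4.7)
The plan is to verify, term by term, that the pair of real-valued equations \eqref{seperate} falls into the template set out by Definition 1, with the claimed choices of $H$ and $\widetilde H$. Since the structural content of the lemma is a direct computation, the proof reduces to reading off the drift and noise coefficients from \eqref{seperate} and matching them against the variational gradients of the two Hamiltonians.

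First I would extract from \eqref{seperate} the drift pair
\[
f(p,q)=(-\Delta)^{\alpha}q+\lambda(p^{2}+q^{2})^{\sigma}q,\qquad g(p,q)=-(-\Delta)^{\alpha}p-\lambda(p^{2}+q^{2})^{\sigma}p,
\]
and the noise coefficients $\sigma(p,q)=q$, $\gamma(p,q)=-p$. The noise part is immediate: differentiating $\widetilde H(t,p,q)=-\tfrac12(p^{2}+q^{2})$ gives $\nabla_p\widetilde H=-p$ and $\nabla_q\widetilde H=-q$, so the relations $-\nabla_q\widetilde H=q=\sigma$ and $\nabla_p\widetilde H=-p=\gamma$ match exactly the diffusion terms in \eqref{seperate}, handling the Stratonovich part at once.

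The core step is computing the variational gradients of $H$. The polynomial term $\tfrac{\lambda}{2(\sigma+1)}(p^{2}+q^{2})^{\sigma+1}$ differentiates in closed form to $\lambda(p^{2}+q^{2})^{\sigma}p$ and $\lambda(p^{2}+q^{2})^{\sigma}q$ respectively. For the quadratic pieces $-\tfrac12 p^{T}\mathcal{G}(\mathcal{G}p)$ and $-\tfrac12 q^{T}\mathcal{G}(\mathcal{G}q)$ the essential ingredient is that $\mathcal{G}$ is skew-adjoint on $L^{2}(\mathbb{R}^{n})$ — a fact visible from its Fourier symbol $i|\xi|^{\alpha-1}\xi$, which is purely imaginary and odd in $\xi$, and which is precisely the content recorded (in the periodic setting) in the preceding lemma for $\tilde{\mathcal{G}}$. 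Skew-adjointness of $\mathcal{G}$ forces $\mathcal{G}^{2}$ to be self-adjoint, so polarising the quadratic form yields $\nabla_p(p^{T}\mathcal{G}^{2}p)=2\mathcal{G}^{2}p$. Combining this with the identity $\mathcal{G}^{2}=-(-\Delta)^{\alpha}$ from the lemma just above produces
\[
\nabla_p H=(-\Delta)^{\alpha}p+\lambda(p^{2}+q^{2})^{\sigma}p,\qquad \nabla_q H=(-\Delta)^{\alpha}q+\lambda(p^{2}+q^{2})^{\sigma}q.
\]

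With these gradients in hand, the final step is to substitute into the Hamiltonian template of Definition 1 and read off that the drift vector $(f,g)$ is generated by $H$ and that the diffusion pair $(\sigma,\gamma)$ is generated by $\widetilde H$, so \eqref{seperate} is indeed a stochastic Hamiltonian system in the sense of Definition 1. The main (and essentially only) obstacle is the middle computation: justifying $\nabla_p(p^{T}\mathcal{G}^{2}p)=2\mathcal{G}^{2}p$ for $p\in H^{\alpha}(\mathbb{R}^{n})$ requires the self-adjointness of $\mathcal{G}^{2}$, which I would verify via Plancherel applied to the explicit Fourier multiplier; all other pieces are bookkeeping differentiation.
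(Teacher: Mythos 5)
The paper states this lemma without any proof, so there is nothing to compare against line by line; your direct verification --- read off the drift and diffusion coefficients of \eqref{seperate}, compute the variational gradients of $H$ and $\widetilde H$ using the skew-adjointness of $\mathcal{G}$ (hence self-adjointness of $\mathcal{G}^2$) and the identity $\mathcal{G}^2=-(-\Delta)^{\alpha}$, and match against Definition~1 --- is exactly the intended argument, and your gradient computations are correct: $\nabla_pH=(-\Delta)^{\alpha}p+\lambda(p^2+q^2)^{\sigma}p$, $\nabla_qH=(-\Delta)^{\alpha}q+\lambda(p^2+q^2)^{\sigma}q$, and the noise part matches Definition~1 exactly with $\widetilde H=-\tfrac12(p^2+q^2)$.

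There is, however, one point you gloss over in the final ``read off'' step. With your (correct) gradients, the drift of $dp$ in \eqref{seperate} equals $+\nabla_qH$ and the drift of $dq$ equals $-\nabla_pH$, whereas Definition~1 demands $f=-\partial H/\partial q$ and $g=+\partial H/\partial p$. So the drift is generated, in the convention of Definition~1, by $-H$ rather than by $H$ as stated; equivalently, the system is canonically Hamiltonian for the given $H$ only after exchanging the roles of $p$ and $q$. (The lemma's displayed equations, which pair $dp$ with $-\nabla_pH$ and $dq$ with $+\nabla_qH$, are themselves a typo: that pairing is a gradient-flow structure, not a symplectic one, and does not reproduce \eqref{seperate} either.) Your proof should either state explicitly that the Hamiltonian generating the drift in the sense of Definition~1 is $-H$, or record that the lemma is using the opposite sign convention $dp=\partial H/\partial q\,dt+\dots$, $dq=-\partial H/\partial p\,dt+\dots$; as written, the sentence ``the drift vector $(f,g)$ is generated by $H$'' hides a global sign error. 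Everything else --- the polarization identity $\nabla_p\bigl(p^{T}\mathcal{G}^2p\bigr)=2\mathcal{G}^2p$ justified via the Fourier symbol and Plancherel, and the differentiation of the nonlinear term --- is sound.
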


One of the inherent canonical properties of the Hamiltonian system is the symplectic or multi-symplectic of its flows, for instance \cite{DC06, DC08} and references therein.  These particular integrators have thus naturally come into the realm of a stochastic partial differential equation, for example \cite{GNM02, CAA14, KB12} and reference therein. The stochastic Schr\"odinger equation can be interpreted as a canonical infinite-dimensional Hamiltonian system \cite{JH19}.

Let $v=\mathcal{G}p$, $\omega=\mathcal{G}q$, $z=(p,q,v,\omega)^{T}$, then the equation \eqref{seperate} can be rewritten as
\begin{equation}
\label{symple}
\mathbf{M}z_t+\mathbf{K}(\mathcal{G}z)=\nabla_{z} S_1(z) +\nabla_{z} S_2(z)\circ \dot{{W}}_t,
\end{equation}
where
$$
\mathbf{M}=
\begin{pmatrix}
0 & 1 & 0 & 0 \\
-1 & 0 & 0 & 0 \\
0 & 0 & 0 & 0 \\
0 & 0 & 0 & 0 \\
\end{pmatrix}
,~~~~
\mathbf{K}=
\begin{pmatrix}
0 & 0 & -1 & 0 \\
0 & 0 & 0 & -1 \\
1 & 0 & 0 & 0 \\
0 & 1 & 0 & 0 \\
\end{pmatrix}
,~~~~
$$
and
\begin{equation*}
S_1(z)=\frac{1}{2}v^2+\frac{1}{2}\omega^2-\frac{\lambda}{2(\sigma+1)}(p^2+q^2)^{\sigma+1}, ~~~~S_2(z)=-\frac{1}{2}(p^2+q^2).
\end{equation*}

Next, we define $\Psi$ and $\partial_{x}\kappa$ by
\begin{equation}
\Psi=\frac{1}{2}dz\wedge \mathbf{M}dz, ~~\partial_{x}\kappa={\frac{1}{2}}dz\wedge \mathbf{K}d\left(\mathcal{G}z\right).
\end{equation}

Then we have the following theorem.
\begin{theorem}(Stochastic  multi-symplectic conservation law).
\label{msy}
The stochastic multi-symplectic Hamiltonian system  \eqref{symple} preserves the stochastic  multi-symplectic conservation law locally, i.e.,
\begin{equation}
\label{conservation}
d_t\Psi +\partial_{x}\kappa(x,t)=0, ~~a.s.
\end{equation}
In other words,
\begin{equation*}
\Psi(t_1,x)-\Psi(t_0,x)=-\int^{t_1}_{t_0}\partial_{x}\kappa(t,x)dt.
\end{equation*}
\end{theorem}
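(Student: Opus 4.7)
The plan is to mimic the classical variational derivation of the multi-symplectic conservation law, exploiting the fact that the Stratonovich convention preserves the ordinary Leibniz rule and therefore the underlying wedge-product identities remain valid in the stochastic setting without any It\^o correction.

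First, I would apply the exterior differential $d$, taken with respect to variations of the solution $z$, to both sides of the multi-symplectic form (\ref{symple}). Since $d$ commutes with $\partial_t$, with the linear operators $\mathbf{M}$, $\mathbf{K}$, and $\mathcal{G}$, and (because the noise appears in Stratonovich form) with the stochastic differential, this yields the variational equation
\begin{equation*}
\mathbf{M}\,(dz)_t + \mathbf{K}\,d(\mathcal{G}z) \;=\; \nabla_z^2 S_1(z)\,dz + \nabla_z^2 S_2(z)\,dz \circ dW_t.
\end{equation*}
Next I would wedge this identity on the left by $dz$. The right-hand side vanishes identically: each Hessian $\nabla_z^2 S_i(z)$ is a symmetric bilinear form, so $dz \wedge \nabla_z^2 S_i(z)\,dz = 0$ by antisymmetry of the wedge product. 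A single observation thus eliminates both the Hamiltonian flow and the stochastic terms simultaneously. On the left-hand side, the skew-symmetry $\mathbf{M}^T = -\mathbf{M}$ combined with the antisymmetry of $\wedge$ gives $dz \wedge \mathbf{M}\,(dz)_t = \tfrac{1}{2}\partial_t\bigl(dz \wedge \mathbf{M}\,dz\bigr) = d_t\Psi$, and an analogous manipulation using $\mathbf{K}^T = -\mathbf{K}$ together with the definition of $\partial_x\kappa$ identifies the remaining term with the spatial flux. Assembling these pieces produces the differential identity (\ref{conservation}); integrating in $t$ over $[t_0,t_1]$ then yields the stated almost-sure equality $\Psi(t_1,x) - \Psi(t_0,x) = -\int_{t_0}^{t_1}\partial_x\kappa(t,x)\,dt$.

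The principal technical difficulty I anticipate lies in the handling of the nonlocal operator $\mathcal{G}$ within the wedge calculus. Unlike a local derivative $\partial_x$, the 1-form $d(\mathcal{G}z)(x)$ encodes variations of $z$ at \emph{all} spatial points, so one must justify the pointwise interpretation of $dz(x) \wedge \mathbf{K}\,d(\mathcal{G}z)(x)$ as a legitimate multi-symplectic flux. The skew-adjointness of $\mathcal{G}$ established in the preceding lemma is the key tool here: it guarantees that the nonlocal contribution is representable as a conservative flux in the appropriate weak sense, so that $\partial_x\kappa$ indeed plays the role of a spatial divergence. A secondary, but essential, point is to verify that the Stratonovich differential commutes with the exterior variation $d$ and obeys the ordinary Leibniz rule on $\Psi$; this is precisely what distinguishes the Stratonovich formulation from the It\^o one and is what makes the deterministic-style wedge calculation survive the presence of the multiplicative noise $u\circ dW$.
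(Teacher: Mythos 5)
Your proposal follows essentially the same route as the paper's proof: differentiate \eqref{symple} to obtain the variational equation, wedge with $dz$ so that the symmetric Hessians $\nabla_{zz}S_1$ and $\nabla_{zz}S_2$ annihilate both the drift and the Stratonovich noise terms, and then use the skew-symmetry of $\mathbf{M}$ and $\mathbf{K}$ (the paper does this by adding the transposed identity and invoking $dz\wedge\mathbf{K}d(\mathcal{G}z)=d(\mathcal{G}z)\wedge\mathbf{K}dz$, which is the same algebra as your factor-$\tfrac12$ rewriting) to identify $d_t\Psi$ and $\partial_x\kappa$. The additional concerns you raise about the nonlocal operator $\mathcal{G}$ are reasonable but are effectively sidestepped in the paper, since $\partial_x\kappa$ is \emph{defined} as $\tfrac12\,dz\wedge\mathbf{K}\,d(\mathcal{G}z)$.
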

\begin{proof}

Differentiating \eqref{symple} gives the following variational equation
\begin{equation}
\label{variat}
\mathbf{M}dz_t+\mathbf{K}d(\mathcal{G}z)=\bigtriangledown_{zz}S_1(z)dz+\bigtriangledown_{zz}S_2(z)dz\circ \dot{W},
\end{equation}
where we use the fact that the exterior differential commutes with the stochastic differential and the spatial differential.

Then by wedging $dz$ on the above equation \eqref{variat}, we have
\begin{equation*}
dz\wedge \mathbf{M}dz_t+ dz\wedge \mathbf{K}d(\mathcal{G}z)=dz\wedge \left[\bigtriangledown_{zz}S_1(z)dz\right]
+dz\wedge \left[\bigtriangledown_{zz}S_2(z)dz\circ\dot{ W}\right].
\end{equation*}
Since $ \bigtriangledown_{zz}S_1(z)$ and $\bigtriangledown_{zz}S_2(z)$ are symmetric, $dz\wedge \bigtriangledown_{zz}S_1(z)dz=0$ and $dz\wedge \bigtriangledown_{zz}S_2(z)dz=0$ hold. Therefore we have
\begin{equation}
\label{combine1}
dz\wedge \mathbf{M}dz_t+ dz\wedge \mathbf{K}d(\mathcal{G}z)=0.
\end{equation}

Recalling that $\mathbf{M}$ and  $\mathbf{K}$ are skew-symmetric matrices, we have
\begin{equation*}
dz_t\wedge \mathbf{M}dz+d(\mathcal{G}z)\wedge \mathbf{K}dz=0.
\end{equation*}
Adding the above two equations yields
\begin{equation}
\label{combine2}
dz\wedge \mathbf{M}dz_t+dz_t\wedge \mathbf{M}dz+\left(dz\wedge \mathbf{K}d(\mathcal{G}z)+d(\mathcal{G}z)\wedge \mathbf{K}dz\right)=0.
\end{equation}
Note that
\begin{equation}
\label{wedge}
dz\wedge \mathbf{K}d(\mathcal{G}z)=d(\mathcal{G}z)\wedge \mathbf{K}dz.
\end{equation}
Then, we have
\begin{equation*}
d_t\Psi +\partial_{x}\kappa(x,t)=0, ~~a.s.
\end{equation*}

\end{proof}

\begin{remark}
Theorem \ref{msy} states that the stochastic nonlinear Schr\"odinger equation \eqref{orie07} exhibits a stochastic multi-symplectic conservation law, making it a stochastic multi-symplectic Hamiltonian system. When $\alpha=2$, this result reduces to the classical findings, as shown in \cite{CH16}. Furthermore, if we disregard the noise term, the system \eqref{symple} satisfies the generalized multi-symplectic conservation law
\begin{equation*}
\partial_t{\Psi}+\partial_{x}\kappa=0.
\end{equation*}
This outcome aligns with the deterministic results, as demonstrated in \cite[Theorem 2.3]{WH18}.
\end{remark}

The preservation of the stochastic symplectic structure has the following theorem. The proof is left in Appendix A.
\begin{lemma}(Preservation of the stochastic symplectic structure).
\label{presym}
The phase flow of stochastic nonlinear Schr\"odinger equation \eqref{orie07} preserves the stochastic symplectic structure.
\end{lemma}

\subsection{Structure-preserving numerical methods for SFNSE with the one-dimensional case}
Now, we will present a new numerical method, which preserves the discrete form of the stochastic multi-symplectic conservation law when it is applied to the stochastic multi-symplectic Hamiltonian system.

The SFNSE of interest has the form
\begin{equation}
\label{orie070}
\left\{
\begin{aligned}
&i du-\left[(-\Delta)^{\alpha} u+\lambda |u|^{2\sigma}u\right] dt=u \circ dW(t), \quad x \in (a,b), \quad t \geq 0, \\
&u(0)=u_0,
\end{aligned}
\right.
\end{equation}
Given a positive even integer $N$ with the mesh size $h=\frac{b-a}{N}$. Let $0=t_0<t_1<t_2<\cdots<t_{N}=T$ be a equidistant division of $[0,T]$ with time step $\delta t=\frac{T}{N}$ and $\triangle W_n=W(t_{n+1})-W(t_{n}),  n \in \{0,1,\cdots, N-1\},$ be the increments of the Brownian motion. Denote spatial grid points $x_j = a+ jh$ , for $0 \leq j \leq J$. Let $u^n_j$  be the numerical approximation of the
solution $u(x_j, t_n)$. Then the interpolation approximation $I_{N}u(x)$ of the function $u(x)$ has the follow form
\begin{equation}
I_{N}u(x)=\sum^{N/2}_{k=-N/2}\widetilde{u}_{k}e^{ik\mu(x-a)},
\end{equation}
where $\widetilde{u}_{k}=\frac{1}{Nc_k}\sum^{N-1}_{j=0}u(x_j)e^{-ik\mu(x_j-a)}$, $\mu=\frac{2\pi}{b-a}, c_k=1$ for $|k|<\frac{N}{2}$, and $c_k=2$ for $k=\pm \frac{N}{2}$.

Using the Fourier pseudospectral method \cite{SW11}, we can approximate the fractional Laplacian of $u(x_j,t)$ by
\begin{equation}
\label{0L}
-(-\Delta)^{\alpha}u_{N}(x_j,t)=-\sum^{N/2}_{k=-N/2}|k\mu|^{2\alpha}\widetilde{u}_{k}e^{ik\mu(x_j-a)},
\end{equation}
Plugging $\widetilde{u}_k$ into \eqref{0L}, we yields
\begin{equation}
\begin{aligned}
-(-\Delta)^{\alpha}u_{N}(x_j,t)&=-\sum_{l=0}^{N-1}u(x_l,t)\left(\sum^{N/2}_{k=-N/2}\frac{1}{Nc_k}|k\mu|^{2\alpha}\widetilde{u}_{k}e^{ik\mu(x_j-x_l)}\right):=(D^{\alpha}_2\mathbf{u})_{j},
\end{aligned}
\end{equation}
where $D^{\alpha}_2$ is an $N\times N$ matrix with elements
\begin{equation*}
(D^{\alpha}_2u)_{j,l}=\sum^{N/2}_{-N/2}\frac{1}{Nc_k}|k\mu|^{2\alpha}\widetilde{u}_ke^{ik\mu(x_j-x_l)},
\end{equation*}
and
\begin{equation*}
\mathbf{u}=(u_0, u_1, \cdots, u_{N-1})^{T}.
\end{equation*}
Similarly, the operator $\tilde{\mathcal{G}}$ can be approximated by
\begin{equation*}
\label{L}
\tilde{\mathcal{G}}u_{N}(x_j)=\sum^{N/2}_{-N/2}ik\mu|k\mu|^{\frac{\alpha-2}{2}}\widetilde{u}_{k}e^{ik\mu(x_j-a)}.
\end{equation*}
Plugging $\widetilde{u}_k$ into \eqref{L}, we yields

\begin{equation}
\begin{aligned}
\label{LL}
\tilde{\mathcal{G}}u_{N}(x_j,t)&=\sum_{l=0}^{N-1}u(x_l,t)\left(\sum^{N/2}_{k=-N/2}\frac{1}{Nc_k}ik\mu|k\mu|^{\alpha-1}e^{ik\mu(x_j-x_l)}\right):=(D^{\alpha}_1\mathbf{u})_j.
\end{aligned}
\end{equation}
where $D^{\alpha}_1$ is an $N\times N$ matrix with elements
\begin{equation*}
(D^{\alpha}_1u)_{j,l}=\sum^{N/2}_{k=-N/2}\frac{1}{Nc_k}ik\mu|k\mu|^{\frac{\alpha-2}{2}}e^{ik\mu(x_j-x_l)}.
\end{equation*}

Next we give the some elementary properties of matrices $D^{\alpha}_1$ and $D^{\alpha}_2$, which has been shown, see the reference \cite[Lemma 3.1]{WH18}.
\begin{lemma}
Matrix $D^{\alpha}_1$ and $\left(D^{\alpha}_1\right)^2$ are skew-symmetric matrices and $D^{\alpha}_2$ is a symmetric matrix, which the following relationship
\begin{equation}
\left(D^{\alpha}_2\right)_{j,l}=\left(D^{\alpha}_1\right)^2_{j,l}-(-1)^{j+l}\frac{1}{N}\left|\frac{N\mu}{2}\right|^{\alpha}
\end{equation}
holds.
\end{lemma}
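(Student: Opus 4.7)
The plan is to exploit the Fourier-symbol representation of $D^{\alpha}_1$ and $D^{\alpha}_2$ together with the discrete orthogonality identity
\begin{equation*}
\sum_{m=0}^{N-1} e^{i 2\pi (k'-k) m/N} = N \text{ if } k' \equiv k \pmod{N}, \quad 0 \text{ otherwise.}
\end{equation*}
Note that each matrix entry depends only on the index difference $j-l$ through a trigonometric polynomial in $k$, with the symbol odd in $k$ for $D^{\alpha}_1$ and even in $k$ for $D^{\alpha}_2$, while $c_{-k}=c_k$.

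First I would read off the symmetry/skew-symmetry and reality from the parity of the symbols. Swapping $j \leftrightarrow l$ reverses the sign of $x_j - x_l$, which via the substitution $k \mapsto -k$ preserves the even symbol of $D^{\alpha}_2$ and flips the sign of the odd symbol of $D^{\alpha}_1$; this yields the symmetry of $D^{\alpha}_2$ and the skew-symmetry of $D^{\alpha}_1$. The same change of index applied to complex conjugation shows that both matrices are real-valued, so the transpose identities carry real meaning. Since $D^{\alpha}_1$ is real and skew-symmetric, $(D^{\alpha}_1)^{2}$ is then automatically symmetric.

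To prove the relation, I would compute $(D^{\alpha}_1)^{2}_{j,l} = \sum_{m} (D^{\alpha}_1)_{j,m} (D^{\alpha}_1)_{m,l}$ by expanding both factors into their Fourier sums, interchanging the summations so that the inner sum over $m$ becomes the orthogonality kernel above, and then reading off which $(k,k')$ pairs survive. Because $k, k' \in \{-N/2, \ldots, N/2\}$, the surviving pairs satisfy $k' - k \in \{-N, 0, N\}$. The diagonal contribution $k' = k$ almost reproduces $(D^{\alpha}_2)_{j,l}$, except that it carries the weight $c_{k}^{-2}$ instead of $c_{k}^{-1}$, a discrepancy that lives only at the two boundary modes $k = \pm N/2$. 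The off-diagonal contributions $k' - k = \pm N$ force the paired choice $(k,k') = (\mp N/2, \pm N/2)$; using $e^{\pm i (N/2)\mu x_{j}} = (-1)^{j}$ they each produce a term proportional to $(-1)^{j+l}$.

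The main obstacle I expect is the careful bookkeeping at the boundary modes $k = \pm N/2$. The surplus from the $c_{k}^{-2}$ versus $c_{k}^{-1}$ mismatch at these two modes must be assembled with the two aliasing cross-terms at $k'-k = \pm N$ so that the four endpoint contributions combine into the single residual $(-1)^{j+l} \frac{1}{N} |\tfrac{N\mu}{2}|^{\alpha}$ displayed in the identity. The doubled weight $c_{\pm N/2}=2$ is engineered precisely so that the extra factor of $1/2$ on each aliased term compensates the doubling in the diagonal sum, and tracking this simultaneously with the correct signs on all four contributions is the delicate point where a careless computation loses the stated prefactor.
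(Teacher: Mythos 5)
Your proposal is correct, and it is worth noting that the paper itself gives no proof of this lemma: it is stated with a pointer to Lemma~3.1 of the cited work of Wang and Huang, so your Fourier-symbol argument supplies details the text omits rather than competing with an argument in the paper. The two ingredients you use --- parity of the symbols under $k\mapsto -k$ (odd for $D^{\alpha}_1$, even for $D^{\alpha}_2$, with $c_{-k}=c_k$) to get realness and (skew-)symmetry, and the discrete orthogonality over $m$ reducing the double Fourier sum for $(D^{\alpha}_1)^2_{j,l}$ to the pairs $k'-k\in\{-N,0,N\}$ --- are exactly the right ones, and your accounting of the four boundary contributions (the $c_k^{-2}$ versus $c_k^{-1}$ mismatch at $k=\pm N/2$ together with the two aliasing pairs $(k,k')=(\mp N/2,\pm N/2)$, each carrying $e^{\pm i(N/2)\mu (x_j-a)}=(-1)^{j}$) does assemble into the single residual $-(-1)^{j+l}\frac{1}{N}\bigl|\frac{N\mu}{2}\bigr|^{\alpha}$; indeed the two aliasing terms exactly cancel the endpoint deficit inside $(D^{\alpha}_1)^2$, so the whole residual is carried by the endpoint modes of $D^{\alpha}_2$. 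Two remarks that your computation implicitly exposes: first, as you observe, a real skew-symmetric $D^{\alpha}_1$ forces $(D^{\alpha}_1)^2$ to be \emph{symmetric}, so the lemma's attribution of skew-symmetry to $(D^{\alpha}_1)^2$ must be a misstatement (it is also incompatible with the displayed identity, whose right-hand side is symmetric in $j,l$); second, with the symbols as actually printed in the paper ($|k\mu|^{2\alpha}$ for $D^{\alpha}_2$) the residual would carry exponent $2\alpha$ rather than the stated $\alpha$, a normalization inconsistency in the paper's definitions rather than a flaw in your argument. To turn the sketch into a complete proof you need only write out the four endpoint terms explicitly, but the structure you describe is sound.
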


\subsubsection{Stochastic multi-symplectic method}
We now discretize the system \eqref{seperate} in space by the Fourier pseudospectral method to obtain a semi-discrete system for $0\leq j \leq N-1$,

\begin{equation}
\label{0seperate0}
\left\{
\begin{aligned}
\frac{dp_j}{dt}+(D^{\alpha}_2\mathbf{q})_{j}-\lambda\left(p^2_{j}+q^2_{j}\right)^{\sigma}q_{j}-q_{j}\circ\dot{ W}_t=0, \\
\frac{dq_j}{dt}-(D^{\alpha}_2\mathbf{p})_{j}+\lambda\left(p^2_{j}+q^2_{j}\right)^{\sigma}p_{j}+p_{j}\circ\dot{ W}_t=0, \\
\end{aligned}
\right.
\end{equation}
where
\begin{equation*}
\mathbf{p}=(p_0, p_1, \cdots, p_{N-1})^{T}, ~~\mathbf{q}=(q_0, q_1, \cdots, q_{N-1})^{T}.
\end{equation*}

Now we discretize the system \eqref{0seperate0}  in the time midpoint rule to obtain  a semi-discrete scheme:

\begin{equation}
\label{0sham}
\left\{
\begin{aligned}
\frac{p^{n+1}_j-p^{n}_j}{\Delta t}+\left(D^{\alpha}_2\mathbf{q}^{n+\frac{1}{2}}\right)_{j}-\lambda\left[(p^{n+\frac{1}{2}})_j^2+(q^{n+\frac{1}{2}})_j^2\right]^{\sigma}q^{n+\frac{1}{2}}_j-q^{n+\frac{1}{2}}_{j}\triangle \dot{W}_n=0, \\
\frac{q^{n+1}_j-q^{n}_j}{\Delta t}-\left(D^{\alpha}_2\mathbf{p}^{n+\frac{1}{2}}\right)_{j}+\lambda\left[(p^{n+\frac{1}{2}})_j^2+(q^{n+\frac{1}{2}})_j^2\right]^{\sigma}p^{n+\frac{1}{2}}_j+p^{n+\frac{1}{2}}_{j}\triangle \dot{W}_n=0, \\
\end{aligned}
\right.
\end{equation}

where $p_{j+\frac{1}{2}}=\frac{1}{2}(p_j+p_{j+1}),~q_{j+\frac{1}{2}}=\frac{1}{2}(q_j+q_{j+1})$, and $\triangle W_n=W^{}(t_{n+1})-W(t_{n})$.\\

Let $\phi^{n}=p^n+\mathbf{i}q^n$, $\phi^{n+\frac{1}{2}}=\frac{\phi^{n}+\phi^{n+1}}{2}$, then the discrete system \eqref{0sham} can be rewritten as
\begin{equation}
\label{combination}
\frac{\mathbf{i}\left(\phi^{n+1}-\phi^{n}\right)}{\Delta t}+ D^{\alpha}_2\phi^{n+\frac{1}{2}}
-\lambda\left|\phi^{n+\frac{1}{2}}\right|^{2\sigma}\cdot \phi^{n+\frac{1}{2}}-\phi^{n+\frac{1}{2}}\triangle \dot{W}_n=0.
\end{equation}

The next lemma shows that the discrete system \eqref{combination}  preserves mass conservation.
\begin{lemma}
The discrete system \eqref{combination} is also  a stochastic Hamiltonian system, which satisfies the discrete mass conservation law, i.e.,
\begin{equation}
Q(\phi^{n+1})=Q(\phi^{n}), ~~0\leq n \leq N-1, ~~a.s.,
\end{equation}
where $Q(\phi^{n}):=\int_{\mathbb{R}}|\phi^{n}|^2dx $.
\end{lemma}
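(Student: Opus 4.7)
The plan is in two parts. For the first assertion, that the discrete scheme \eqref{combination} is a stochastic Hamiltonian system, I would rewrite it in the real-imaginary coordinates \eqref{0sham} and read off the Hamiltonian structure directly. With the spatial semi-discrete Hamiltonians
\begin{equation*}
\widehat H(\mathbf p,\mathbf q) := \frac{1}{2}\mathbf p^\top D_2^\alpha \mathbf p + \frac{1}{2}\mathbf q^\top D_2^\alpha \mathbf q - \frac{\lambda}{2(\sigma+1)}\sum_{j=0}^{N-1}(p_j^2+q_j^2)^{\sigma+1},\qquad \widetilde H(\mathbf p,\mathbf q) := -\frac{1}{2}\sum_{j=0}^{N-1}(p_j^2+q_j^2),
\end{equation*}
one checks that the drift part of \eqref{0sham} equals $(-\partial_{q_j}\widehat H,\,\partial_{p_j}\widehat H)$ and the diffusion coefficients equal $(-\partial_{q_j}\widetilde H,\,\partial_{p_j}\widetilde H)$, both evaluated at the midpoint $(\mathbf p^{n+1/2},\mathbf q^{n+1/2})$. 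This is exactly the stochastic midpoint rule applied to the Hamiltonian form of Definition 1.

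For the mass law, I would introduce the discrete complex inner product $(u,v)_h := h\sum_{j=0}^{N-1}u_j\bar v_j$ with induced norm $\|u\|_h$, and pair \eqref{combination} with $\phi^{n+1/2}$ in this inner product to obtain
\begin{equation*}
\frac{\mathbf i}{\Delta t}\bigl(\phi^{n+1}-\phi^n,\phi^{n+1/2}\bigr)_h + \bigl(D_2^\alpha\phi^{n+1/2},\phi^{n+1/2}\bigr)_h - \lambda\bigl(|\phi^{n+1/2}|^{2\sigma}\phi^{n+1/2},\phi^{n+1/2}\bigr)_h - \|\phi^{n+1/2}\|_h^2\,\triangle\dot W_n = 0.
\end{equation*}
Each of the three trailing terms is real: the $D_2^\alpha$ term because $D_2^\alpha$ is a real symmetric matrix by the preceding Fourier-matrix lemma; the nonlinear term because the integrand collapses to $|\phi^{n+1/2}_j|^{2\sigma+2}\geq 0$; and the noise term because $\triangle\dot W_n$ is a real scalar multiplying a nonnegative norm. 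Taking imaginary parts therefore annihilates the last three contributions.

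To close the argument, I would apply the elementary midpoint identity
\begin{equation*}
\bigl(\phi^{n+1}-\phi^n,\phi^{n+1/2}\bigr)_h = \frac{1}{2}\bigl(\|\phi^{n+1}\|_h^2 - \|\phi^n\|_h^2\bigr) + \mathbf i\,\mathrm{Im}\bigl(\phi^{n+1},\phi^n\bigr)_h,
\end{equation*}
which after multiplication by $\mathbf i/\Delta t$ and extraction of the imaginary part delivers $\tfrac{1}{2\Delta t}(\|\phi^{n+1}\|_h^2 - \|\phi^n\|_h^2)=0$. Interpreting $\|\cdot\|_h^2$ as the periodic trapezoidal quadrature of $Q(\cdot)$ then yields $Q(\phi^{n+1}) = Q(\phi^n)$ almost surely. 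The only genuine obstacle I anticipate is bookkeeping the signs in the Hamiltonian identification and verifying that the Stratonovich increment behaves as a real multiplier; both reduce to the real-valuedness of the noise kernel (Assumption \ref{assump1}) and to the realness/symmetry of $D_2^\alpha$ already recorded, so no technical machinery beyond what the previous section supplies is needed.
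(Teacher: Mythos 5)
Your proposal is correct and follows essentially the same route as the paper: test \eqref{combination} against the midpoint value $\phi^{n+\frac{1}{2}}$, observe that the fractional-Laplacian, nonlinear, and noise contributions are all real (by the symmetry of $D^{\alpha}_2$, the collapse to $|\phi^{n+\frac{1}{2}}|^{2\sigma+2}$, and the realness of $\triangle \dot W_n$, respectively), and extract the mass difference from the imaginary part. If anything, your write-up is slightly more complete than the paper's, which silently drops the $\left(D^{\alpha}_2\phi^{n+\frac{1}{2}},\phi^{n+\frac{1}{2}}\right)$ term from its displayed computation and does not spell out the Hamiltonian identification you give in the first part.
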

\begin{proof}
Multiplying \eqref{combination} by the complex conjugate $\frac{1}{2}(\phi^{n+1}+\phi^{n})$ on both sides, integrating in $\mathbb{R}$ and taking the imaginary part, we obtain
\begin{equation*}
\begin{aligned}
&\int_{\mathbb{R}}\left\{\mathbf{Im}\left(\frac{\mathbf{i}}{\Delta t}(\phi^{n+1}-\phi^{n})\cdot \frac{\overline{\phi^{n+1}}+\overline{\phi^{n}}}{2}\right)
+\mathbf{Im}\left(-\lambda\left|\phi^{n+\frac{1}{2}}\right|^{2\sigma} \phi^{n+\frac{1}{2}} \cdot \overline{\phi^{n+\frac{1}{2}}}\right)
+\mathbf{Im}\left(\phi^{n+\frac{1}{2}}\triangle \dot{W}_n \cdot \overline{\phi^{n+\frac{1}{2}}}\right)\right\}dx
\\
&=\int_{\mathbb{R}}\left\{\mathbf{Im}\left[\frac{\mathbf{i}}{2\Delta t}
\left(|\phi^{n+1}|^2-|\phi^{n}|^2+\overline{\phi^{n}}\phi^{n+1}-\phi^{n}\overline{\phi^{n+1}}\right)\right]
+\mathbf{Im}\left(-\lambda\left|\phi^{n+\frac{1}{2}}\right|^{2\sigma+2}\right)+\mathbf{Im}\left(\left|\phi^{n+\frac{1}{2}}\right|^2 \cdot \triangle \dot{W}_n\right)\right\}dx
\\&=\int_{\mathbb{R}}\left\{\frac{|\phi^{n+1}|^2-|\phi^{n}|^2}{2\Delta t}\right\}dx\\
&=0.
\end{aligned}
\end{equation*}
This implied that
\begin{equation*}
Q(\phi^{n+1})=Q(\phi^{n}), ~~0\leq n \leq N-1, ~~a.s.
\end{equation*}
\end{proof}

By using a similar technique as \cite[Theorem 3.1]{GM02}, we obtain that the discretized system \eqref{0sham} preserves the stochastic symplectic structure.
\begin{lemma}
The mid-point scheme preserves the stochastic symplectic structure, i.e.,
\begin{equation}
\sum_{j=0}^{N-1}dp^{n+1}_j\wedge dq^{n+1}_j=\sum_{j=0}^{N-1}dp^{n}_j\wedge dq^{n}_j.
\end{equation}
\end{lemma}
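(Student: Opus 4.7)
The strategy is the standard exterior-calculus argument for midpoint discretizations of stochastic Hamiltonian systems, adapted to account for the matrix $D^{\alpha}_2$ that encodes the fractional Laplacian. Since the semi-discrete system \eqref{0seperate0} has already been identified as a Hamiltonian system with Hamiltonian $\tfrac{1}{2}\mathbf{p}^T D^{\alpha}_2\mathbf{p}+\tfrac{1}{2}\mathbf{q}^T D^{\alpha}_2\mathbf{q}-\tfrac{\lambda}{2(\sigma+1)}\sum_j(p_j^2+q_j^2)^{\sigma+1}$ and diffusion Hamiltonian $\widetilde H=-\tfrac{1}{2}\sum_j(p_j^2+q_j^2)$, the midpoint rule \eqref{0sham} falls into the framework of \cite{GM02,WP19}. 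The task reduces to tracking the exterior derivatives term by term and checking that the structural identities survive the fractional spatial discretization.

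First I would take the exterior differential of each scalar equation in \eqref{0sham}, treating the $p^n_j,q^n_j$ as functions of the initial data so that $dp^n_j,dq^n_j$ are well-defined one-forms on phase space. Using the bilinear identity
\begin{equation*}
\sum_j dp^{n+1}_j\wedge dq^{n+1}_j-\sum_j dp^n_j\wedge dq^n_j=\sum_j d(p^{n+1}_j-p^n_j)\wedge dq^{n+1/2}_j+\sum_j dp^{n+1/2}_j\wedge d(q^{n+1}_j-q^n_j),
\end{equation*}
I would substitute the differentiated scheme. The resulting sum breaks into three groups: (i) the fractional-Laplacian contribution $-\Delta t\sum_{j,k}(D^{\alpha}_2)_{jk}dq^{n+1/2}_k\wedge dq^{n+1/2}_j+\Delta t\sum_{j,k}(D^{\alpha}_2)_{jk}dp^{n+1/2}_j\wedge dp^{n+1/2}_k$; (ii) the stochastic contribution $\Delta W_n\bigl(\sum_j dq^{n+1/2}_j\wedge dq^{n+1/2}_j-\sum_j dp^{n+1/2}_j\wedge dp^{n+1/2}_j\bigr)$; and (iii) the nonlinear contribution coming from the $|\phi^{n+1/2}|^{2\sigma}\phi^{n+1/2}$ terms.

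The first group vanishes because $D^{\alpha}_2$ is symmetric (this is exactly why the symmetry statement in the preceding lemma is recorded): swapping the summation indices and using skew-symmetry of $\wedge$ forces each sum to equal its own negative. The second group vanishes by the antisymmetry of $\wedge$ alone. The real obstacle, and the step I expect to need care, is the nonlinear piece: writing $f(p,q)=(p^2+q^2)^{\sigma}q$ and $g(p,q)=(p^2+q^2)^{\sigma}p$, one computes $df\wedge dq=f_p\,dp\wedge dq=2\sigma(p^2+q^2)^{\sigma-1}pq\,dp\wedge dq$ and $dp\wedge dg=g_q\,dp\wedge dq=2\sigma(p^2+q^2)^{\sigma-1}pq\,dp\wedge dq$, evaluated at $(p^{n+1/2}_j,q^{n+1/2}_j)$. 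Because these enter the two lines of \eqref{0sham} with opposite signs, the sums cancel pointwise in $j$; this cancellation is exactly the manifestation of $\partial_{pq}^2 V=\partial_{qp}^2 V$ for the potential $V$.

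Combining the three vanishing groups yields $\sum_j dp^{n+1}_j\wedge dq^{n+1}_j-\sum_j dp^n_j\wedge dq^n_j=0$ almost surely, which is the claim. The only genuinely fractional ingredient entering the argument is the symmetry of $D^{\alpha}_2$ from the preceding lemma; once that is granted, the computation parallels the Fourier-pseudospectral midpoint proof for the classical stochastic Schrödinger equation in \cite{GM02,CH16}, so I would cite those references for the routine bookkeeping and emphasize the symmetry property and nonlinear cancellation as the two checks specific to the present setting.
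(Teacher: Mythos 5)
Your argument is correct and is exactly the standard midpoint-symplecticity computation that the paper itself invokes by deferring to \cite[Theorem~3.1]{GM02} rather than writing out the details; the two ingredients you isolate --- the symmetry of $D^{\alpha}_2$ killing the fractional-Laplacian wedge sums, and the identity $\partial_p\bigl[(p^2+q^2)^\sigma q\bigr]=\partial_q\bigl[(p^2+q^2)^\sigma p\bigr]$ killing the nonlinear terms --- are precisely what is needed, and the stochastic terms vanish trivially as you note. In effect you have supplied the proof the paper omits, following the same route it points to.
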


\subsection{Generalized stochastic multi-symplectic  method}

Applying the Fourier pseudospectral method to the multi-symplectic \eqref{seperate}, we obtain a semi-discrete system, for $0\leq j \leq N-1$,
\begin{equation}
\label{0seperate1}
\left\{
\begin{aligned}
\frac{dp_j}{dt}+(D^{\alpha}_1\mathbf{w})_{j}-\lambda\left(p^2_{j}+q^2_{j}\right)^{\sigma}q_{j}-q_{j}\circ\dot{ W}_t=0, \\
\frac{dq_j}{dt}-(D^{\alpha}_1\mathbf{v})_{j}+\lambda\left(p^2_{j}+q^2_{j}\right)^{\sigma}p_{j}+p_{j}\circ\dot{ W}_t=0, \\
(D^{\alpha}_1p)_{j}=v_j, \\
(D^{\alpha}_1q)_{j}=w_j,
\end{aligned}
\right.
\end{equation}
where $\mathbf{p}=(p_0, p_1, \cdots, p_{N-1})^{T},~~ \mathbf{q}=(q_0, q_1, \cdots, q_{N-1})^{T}, ~~\mathbf{v}=D^{\alpha}_1\mathbf{p}, ~~\mathbf{\omega}=D^{\alpha}_1\mathbf{q}$.

Moreover, we also have the following stochastic multi-symplectic conservation law for the semi-discrete system \eqref{0seperate1}. The proof is left in Appendix A.

\begin{lemma}
\label{corosym}
The system \eqref{0seperate1} satisfies the following semi-discrete stochastic multi-symplectic conservation law, i.e.,
\begin{equation}
\label{con2}
\frac{d}{dt}\Psi_j+\frac{1}{2}dz_j\wedge \mathbf{K}d\tilde{\mathcal{G}}z_j=0, ~~0\leq j\leq N-1, ~~a.s.,
\end{equation}
where
\begin{equation*}
z_j=(p_j, q_j, v_j, w_j)^{T}, ~~\Psi_j=\frac{1}{2}(dz_j\wedge \mathbf{M}dz_j), ~~\tilde{\mathcal{G}}dz_j=\sum_{k=0}^{N-1}(D^{\alpha}_1)_{j,k}dz_k.
\end{equation*}
\end{lemma}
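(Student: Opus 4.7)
The plan is to mirror, in the semi-discrete spatial setting, the argument used for Theorem \ref{msy}. First I would recast the semi-discrete system \eqref{0seperate1} in the compact multi-symplectic form
\begin{equation*}
\mathbf{M}\frac{dz_j}{dt}+\mathbf{K}\tilde{\mathcal{G}}z_j=\nabla_{z}S_1(z_j)+\nabla_{z}S_2(z_j)\circ\dot{W}_t,\qquad 0\leq j\leq N-1,
\end{equation*}
where $z_j=(p_j,q_j,v_j,w_j)^T$, $\tilde{\mathcal{G}}z_j$ denotes the vector obtained by applying $D^\alpha_1$ componentwise to $\mathbf{p},\mathbf{q},\mathbf{v},\mathbf{w}$ and evaluating at the $j$-th entry, and $S_1,S_2$ are exactly the same Hamiltonians as in the continuous formulation \eqref{symple}. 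A direct row-by-row check using the constraints $v_j=(D^\alpha_1\mathbf{p})_j$ and $w_j=(D^\alpha_1\mathbf{q})_j$ shows that the first two rows reproduce the two evolution equations in \eqref{0seperate1} and the last two rows reproduce the constraints.

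Next I would take the exterior derivative in the phase-space variable; because differentiation in $t$ and the linear action of $D^\alpha_1$ both commute with the exterior derivative, I obtain the variational equation
\begin{equation*}
\mathbf{M}\frac{d(dz_j)}{dt}+\mathbf{K}\tilde{\mathcal{G}}(dz_j)=\nabla_{zz}S_1(z_j)\,dz_j+\nabla_{zz}S_2(z_j)\,dz_j\circ\dot{W}_t.
\end{equation*}
Wedging $dz_j$ on the left and invoking the symmetry of the Hessians $\nabla_{zz}S_1(z_j)$ and $\nabla_{zz}S_2(z_j)$, the right-hand side vanishes almost surely, leaving
\begin{equation*}
dz_j\wedge\mathbf{M}\frac{d(dz_j)}{dt}+dz_j\wedge\mathbf{K}\tilde{\mathcal{G}}(dz_j)=0.
\end{equation*}

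To finish, I would identify the two terms with the quantities in \eqref{con2} by using skew-symmetry of $\mathbf{M}$ and $\mathbf{K}$. The identity $\alpha\wedge A\beta=\beta\wedge A\alpha$ for any skew-symmetric $A$ and vector-valued one-forms $\alpha,\beta$, applied with $A=\mathbf{M}$, combined with the Leibniz rule, gives
\begin{equation*}
dz_j\wedge\mathbf{M}\frac{d(dz_j)}{dt}=\frac{1}{2}\frac{d}{dt}\bigl(dz_j\wedge\mathbf{M}dz_j\bigr)=\frac{d}{dt}\Psi_j.
\end{equation*}
The same identity with $A=\mathbf{K}$ yields $dz_j\wedge\mathbf{K}\tilde{\mathcal{G}}(dz_j)=\tilde{\mathcal{G}}(dz_j)\wedge\mathbf{K}dz_j$, and symmetrizing exactly as in the passage from \eqref{combine1} through \eqref{wedge} in the continuous proof lets me rewrite the flux term in the form $\tfrac{1}{2}dz_j\wedge\mathbf{K}d\tilde{\mathcal{G}}z_j$ adopted in the paper. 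Putting these two identifications together delivers \eqref{con2}.

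The proof is purely algebraic, so the only genuine obstacle is bookkeeping: making sure the exterior differential is applied correctly to the Stratonovich term (which is legitimate because $\circ\,dW$ obeys ordinary calculus) and that the factor of $\tfrac{1}{2}$ appearing in the flux is handled with the same convention as in Theorem \ref{msy}. The skew-symmetry of $D^\alpha_1$ proved earlier is not actually needed for this pointwise statement; it will be required later when summing over $j$ to recover a globally preserved symplectic form analogous to the corresponding continuous theorem.
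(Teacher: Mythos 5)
Your proposal is correct and follows essentially the same route as the paper: rewrite \eqref{0seperate1} in the compact form $\mathbf{M}\frac{dz_j}{dt}+\mathbf{K}\sum_{k}(D^{\alpha}_1)_{j,k}z_k=\nabla_{z}S_1(z_j)+\nabla_{z}S_2(z_j)\circ\dot{W}_t$, pass to the variational equation, wedge with $dz_j$, and kill the right-hand side via the symmetry of the Hessians while using the skew-symmetry of $\mathbf{M}$ and $\mathbf{K}$ to identify the remaining terms with $\frac{d}{dt}\Psi_j$ and the discrete flux. You actually spell out the final identification (the identity $\alpha\wedge A\beta=\beta\wedge A\alpha$ for skew-symmetric $A$ and the resulting factor $\tfrac12$) more explicitly than the paper does, and your closing observation that the skew-symmetry of $D^{\alpha}_1$ is only needed for the global statement obtained by summing over $j$ matches the paper's remark.
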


\begin{remark}
Due to $D^{\alpha}_1 $ is a skew-symmetric matrix,  summing over $j$ yields $\frac{d}{dt}\sum^{N-1}_{j=0}\Psi_j=0$. This implies that the conservation of the global symplecticity over time.
\end{remark}

\subsection{Numerical example}
In this section, we consider the following stochastic fractional nonlinear Schr\"odinger equations to illustrate the efficiency of the splitting scheme and show the convergence order for the splitting scheme with spatially regular noise.
\begin{equation}
\label{orie00}
\left\{
\begin{aligned}
&i du-\left[(-\Delta)^{\alpha} u+\lambda|u|^{2\sigma}u\right] dt=\varepsilon u \circ dW(t), \quad x \in [-20,20], \quad t \geq 0, \\
&u(0)=u_0,
\end{aligned}
\right.
\end{equation}

Additionally, it's possible to acquire qualitative information about the impact of the noise. The numerical spatial domain spans $[-20,20]$, and the initial data is selected as $u(x,0)=\mathrm{sech}(x)\mathrm{exp}(2ix)$. We use a temporal step size of $\Delta t=0.01$, a spatial mesh grid size of $\Delta x=0.1$, and consider the time interval $[-20,20]$. We examine the real-valued Wiener process $W(t)=\sum_{l=1}^{100}\frac{1}{l}\sin(\pi lx)\beta_{l}(t)$, where $\{\beta_{l}(t)\}^{100}_{l=1}$ represents a set of independent $\mathbb{R}$-valued Wiener processes.

 \begin{figure}
\begin{center}
\includegraphics*[width=0.7\linewidth]{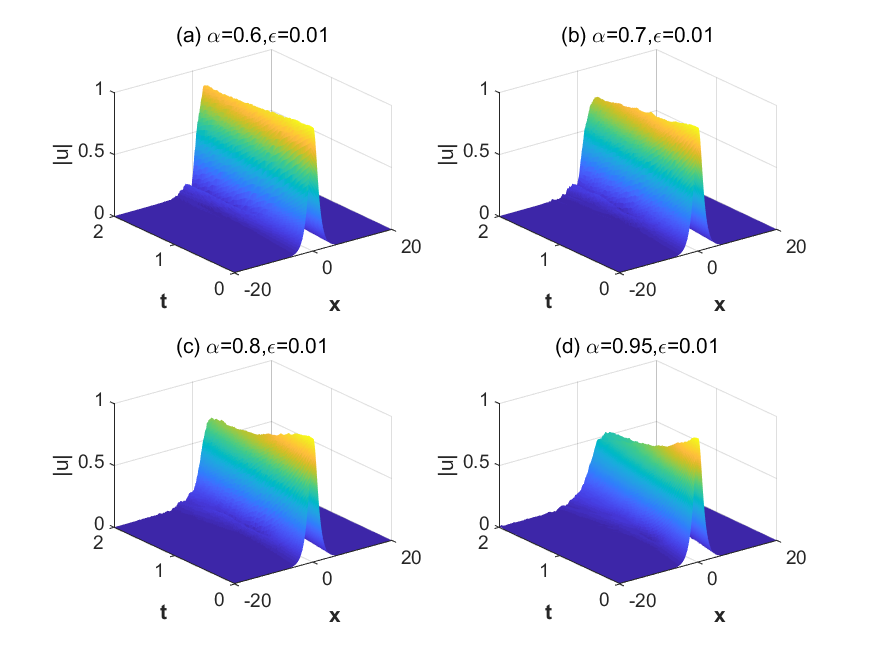}
\end{center}
\caption{The evolution of the solution $u(x,t)$ with the initial condition $u(x,0)=\mathrm{sech}(x)\mathrm{exp}(2ix)$, where $\epsilon=0.01$, $\sigma=1, \lambda=1$ and different $\alpha=0.6,0.7,0.8,0.95$. }
\label{orderError2}
\end{figure}

\begin{figure}
\begin{center}
\includegraphics*[width=0.7\linewidth]{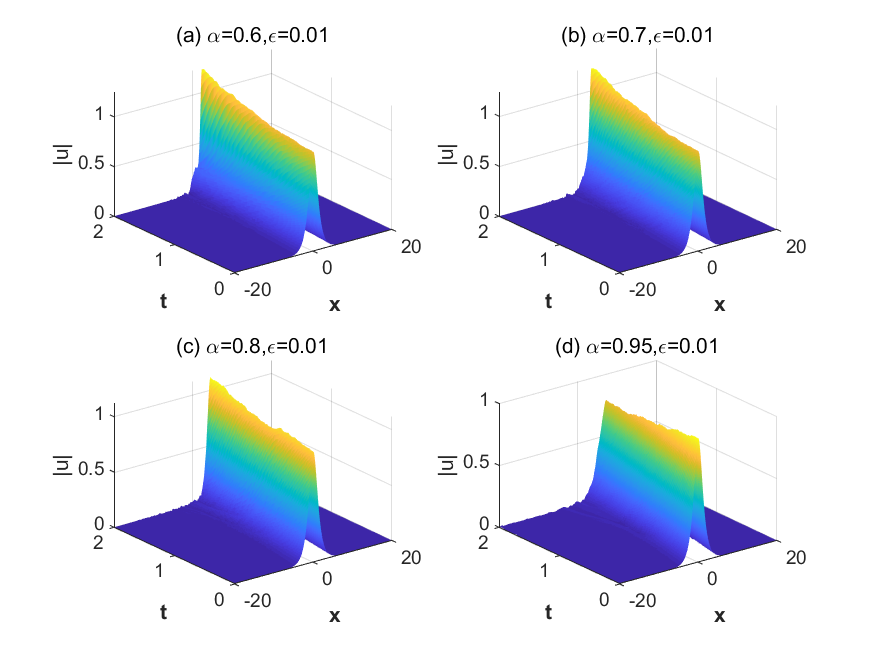}
\end{center}
\caption{The evolution of the solution $u(x,t)$ with the initial condition $u(x,0)=\mathrm{sech}(x)\mathrm{exp}(2ix)$, where $\epsilon=0.01$, $\sigma=1$, $\lambda=-1$ and different $\alpha=0.6,0.7,0.8,0.95$. }
\label{orderError22}
\end{figure}

 \begin{figure}
\begin{center}
\includegraphics*[width=0.7\linewidth]{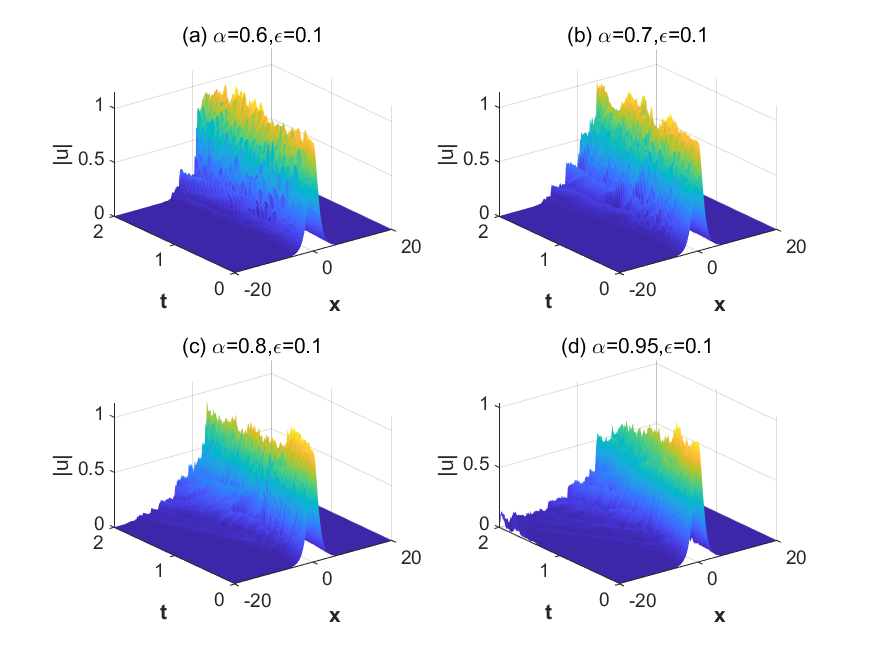}
\end{center}
\caption{The evolution of the solution $u(x,t)$ with the initial condition $u(x,0)=\mathrm{sech}(x)\mathrm{exp}(2ix)$, where $\epsilon=0.1$, $\sigma=1$, $\lambda=1$ and different $\alpha=0.6,0.7,0.8,0.95$. }
\label{orderError3}
\end{figure}

\begin{figure}
\begin{center}
\includegraphics*[width=0.7\linewidth]{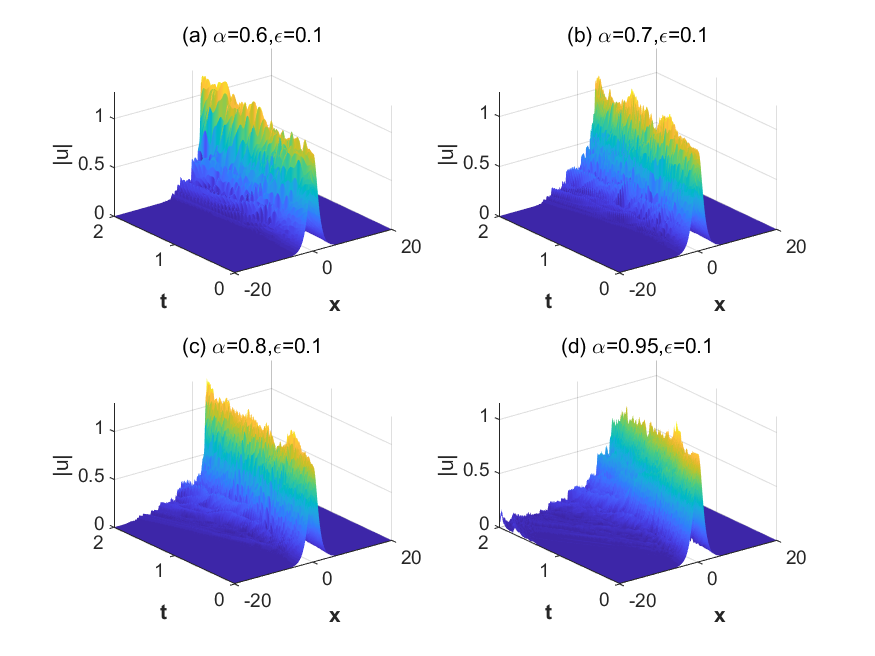}
\end{center}
\caption{The evolution of the solution $u(x,t)$ with the initial condition $u(x,0)=\mathrm{sech}(x)\mathrm{exp}(2ix)$, where $\epsilon=0.1$, $\sigma=1$, $\lambda=-1$ and different $\alpha=0.6,0.7,0.8,0.95$. }
\label{orderError33}
\end{figure}

\begin{figure}
\begin{center}
\includegraphics*[width=0.7\linewidth]{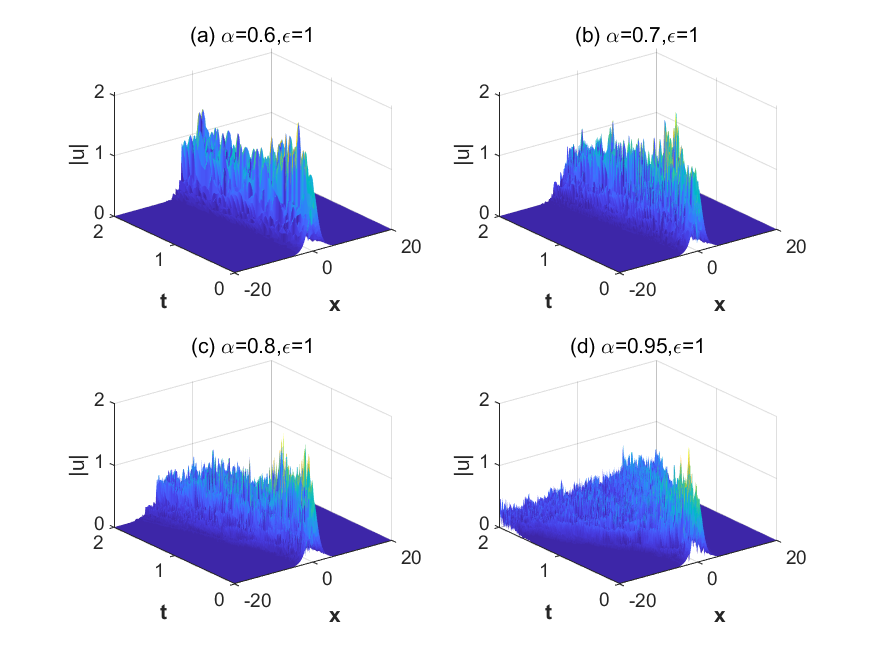}
\end{center}
\caption{The evolution of the solution $u(x,t)$ with the initial condition $u(x,0)=\mathrm{sech}(x)\mathrm{exp}(2ix)$, where $\epsilon=1$, $\sigma=1$, $\lambda=1$ and different $\alpha=0.6,0.7,0.8,0.95$. }
\label{orderError333}
\end{figure}

\begin{figure}
\begin{center}
\includegraphics*[width=0.7\linewidth]{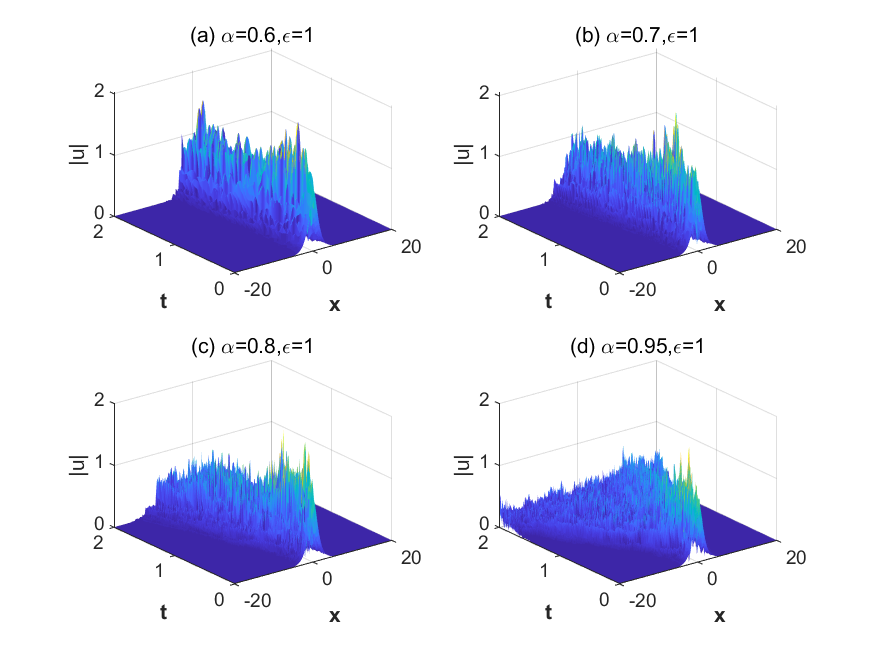}
\end{center}
\caption{The evolution of the solution $u(x,t)$ with the initial condition $u(x,0)=\mathrm{sech}(x)\mathrm{exp}(2ix)$, where $\epsilon=1$, $\sigma=1$, $\lambda=-1$ and different $\alpha=0.6,0.7,0.8,0.95$. }
\label{orderError}
\end{figure}
In Table 1, the global mass conservation is presented with different powers $\alpha=0.6,0.75,0.9$, $\epsilon=0.01$, $\sigma=1$, and various time values, denoted as $t$. The table illustrates that our numerical method maintains mass conservation.
\begin{table}[!hbp]
\centering
\begin{tabular}{cccc}

\hline
 Time & $\alpha=0.6$ &  $\alpha=0.75$   &  $\alpha=0.9$    \\
\Xhline{1.4pt}

 $t=0$  &   1.414211518677561 &  1.414211518677561 & 1.414211518677561  \\
 $t=2$  &    1.414211518677503 &  1.414211518677491 &   1.414211518677486  \\
 $t=4$  &    1.414211518677513 &  1.414211518677470 & 1.414211518677408 \\
 $t=6$  & 1.414211518677494 & 1.414211518677484 & 1.414211518677351\\
 $t=8$ &  1.414211518677447 & 1.414211518677443 &  1.414211518677257\\
 $t=10$ &  1.414211518677408 &  1.414211518677402 &   1.414211518677153 \\
\hline
\end{tabular}
\caption{The value of mass $M(t)$ at different time $t$ for stochastic nonlinear Schr\"odinger equation based on a single sample path .}
\label{MC}
\end{table}

 Figure 1~ and Figure 2~ depict the profile of the numerical solution $|u(x,t)|$ for a single trajectory with various powers $\alpha=0.6,0.7,0.8,0.95$, $\varepsilon=0.01$, and different $\lambda$. Figure 3 and Figure 4~ illustrate the profile of the numerical solution $|u(x,t)|$ for a single trajectory with different powers $\alpha=0.6,0.7,0.8,0.95$, $\varepsilon=0.1$, and different $\lambda$. Figure 5 and Figure 6~ show the profile of the numerical solution $|u(x,t)|$ for a single trajectory with different powers $\alpha=0.6,0.7,0.8,0.95$, $\varepsilon=1$, and different $\lambda$. From the above six figures, it is evident that minor noise does not disrupt the solitary wave and does not hinder its propagation, while high-level noise can affect the velocity of the solitary wave. Figure 7 shows the energy evolution with $\alpha=0.6$, $\lambda=-1$, and $\varepsilon=0.01$, by using the midpoint scheme and the same initial condition.  We can find that the energy is no longer constant in the stochastic case.

\begin{figure}
\begin{center}
\includegraphics*[width=0.7\linewidth]{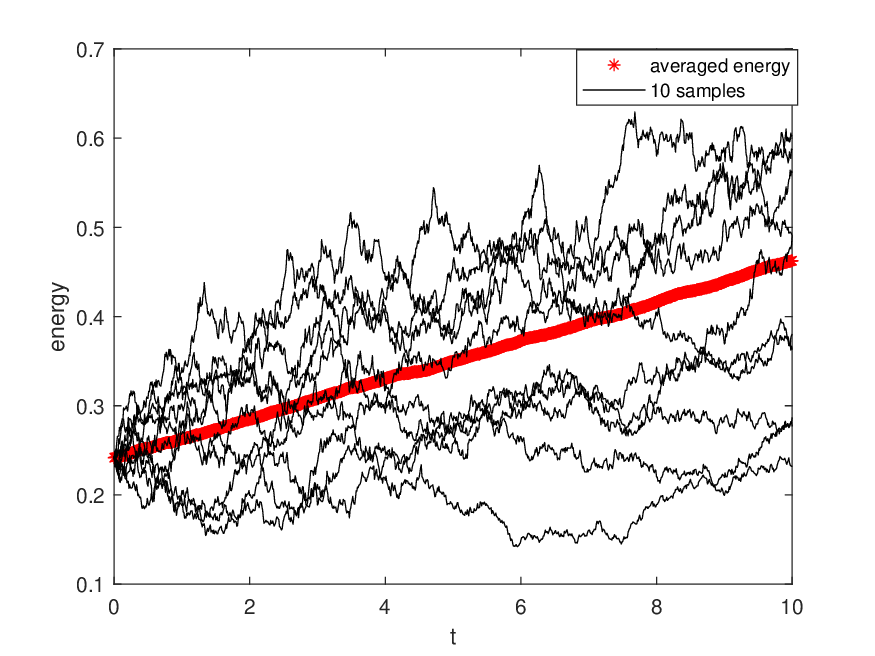}
\end{center}
\caption{The energy of 10 sample orbits and the averaged energy of 500 sample orbits. }
\label{energymean}
\end{figure}

\section{Concluding remarks.}
\par
In this paper, we first investigate the global existence of a solution for the stochastic fractional nonlinear Schr\"odinger equation with radially
symmetric initial data in a suitable energy space $H^{\alpha}$. We then show that the stochastic fractional nonlinear Schr\"odinger equation in the Stratonovich sense forms an infinite-dimensional stochastic Hamiltonian system, with its phase flow preserving symplecticity. Finally, we develop a stochastic midpoint scheme for the stochastic fractional nonlinear Schr\"odinger equation from the perspective of symplectic geometry. It is proved that the stochastic midpoint scheme satisfies the corresponding symplectic law in the discrete sense.

There are some limitations for this paper. Firstly, the condition $1<\alpha <2$ plays an important role in deriving the global well-posedness. How to obtain the global well-posedness and blow up criterion are still open problems with $ \alpha\in (0,1)$. Secondly, the midpoint scheme \eqref{0sham} is implicit, how to provide an explicit numerical scheme to solve the one-dimensional stochastic fractional nonlinear Schr\"odinger equation \eqref{orie07} with $\sigma\neq 0$, is also an active issue.

\section{Appendix A. Further Proofs.}\label{app-1}

\subsection{Proof of Lemma \ref{presym}.}

Spatially integrating the multi-symplectic conservation law \eqref{conservation}, we obtain
\begin{equation}
\label{equu}
\int_{\mathbb{R}^n}\Psi(t_1,x)-\Psi(t_0,x)dx=-\int^{t_1}_{t_0}\int_{\mathbb{R}^n}\partial_x\kappa(x,t)dx dt.
\end{equation}
The form of $\kappa$ yields that
\begin{equation*}
\partial_x\kappa(t,x)=-\frac{1}{2}dp\wedge d\mathcal{G}v-\frac{1}{2}dq\wedge d\mathcal{G} w.
\end{equation*}
Taking the above equality into \eqref{equu}, we obtain
\begin{equation*}
-\int^{t_1}_{t_0}\int_{\mathbb{R}^n}\partial_x\kappa(x,t)dxdt=\int^{t_1}_{t_0}\int_{\mathbb{R}^n}\left[\frac{1}{2}dp\wedge d\mathcal{G}v+\frac{1}{2}dq\wedge d\mathcal{G} w\right]dxdt.
\end{equation*}
The well-posedness of Eq.\eqref{orie07} yields that $\kappa(t, \cdot)$ vanishes. Thus we have
\begin{equation*}
\int_{\mathbb{R}^n}\Psi(t_1,x)dx=\int_{\mathbb{R}^n}\Psi(t_0,x)dx.
\end{equation*}
\subsection{Proof of Lemma \ref{corosym}.}

We rewrite the system \eqref{0seperate1} in a compact form
\begin{equation*}
\mathbf{M}\frac{dz_j}{dt}+\mathbf{K}\sum_{k=0}^{N-1}(D^{\alpha}_1)_{j,k}z_k=\nabla_{z}S_1(z_j)+\nabla S_2(z_j)\circ\dot{ W}_t,
\end{equation*}
and its variational equation is
\begin{equation*}
\mathbf{M}\frac{d}{dt}dz_j+\mathbf{K}\sum_{k=0}^{N-1}(D^{\alpha}_1)_{j,k}dz_k=\nabla_{zz}S_1(z_j)dz_j+\nabla_{zz} S_2(z_j)dz_j\circ\dot{ W}_t,
\end{equation*}
Taking the wedge product of the above equation with $dz_j$ and noting that
\begin{equation*}
dz_j\wedge \nabla_{zz}S_1(z_j)dz_j=0,
\end{equation*}
and
\begin{equation*}
dz_j\wedge \nabla_{zz}S_2(z_j)dz_j=0,
\end{equation*}
due to the symmetries of $\nabla_{zz}S_{1}(z_j)$ and $\nabla_{zz}S_{2}(z_j)$, and $\mathbf{M}, \mathbf{K}$ are skew-symmetric matrices, we obtain the multi-symplectic conservation law \eqref{con2}.

\section{Acknowledgments}
The authors would like to thank  Yichun Zhu (Chinese Academy of Sciences)  for helpful discussions.  The research of X. Wang
is supported by the Natural Science Foundation of Henan Province of China (Grant No. 232300420110).

\end{document}